\numberwithin{figure}{section}
\numberwithin{table}{section}
\def\RR{\mathbb{R}}
\def\P{\mathcal{P}}
\def\L{\mathcal{L}}
\def\Lt{\widetilde{\mathcal{L}}}
\def\T{\mathcal{T}}
\def\Tt{\mathcal{\widetilde{T}}}
\def\W{\mathcal{W}}
\DeclareMathOperator*{\argmin}{argmin}
\newtheorem{remark}{Remark}
\newtheorem{lemma}{Lemma}
\newcommand{\ip}{\textsc{IPOLY}\xspace}   
\newcommand{\ax}{\textsc{LSQP}\xspace} 
\newcommand{\fem}{\textsc{IFEM}\xspace}
\newcommand{\axop}{\textsc{LSQP OPT}\xspace}
\newcommand{\cons}{\textsc{CONS}\xspace}
\newcommand{\TP}{\ensuremath{^\top}}
\begin{document}

\title{A two-scale approach for efficient on-the-fly operator
assembly in massively parallel high performance multigrid codes} 
\author{Simon Bauer$^\MakeLowercase{a}$, Marcus Mohr$^\MakeLowercase{a}$, Ulrich R\"ude$^\MakeLowercase{b}$, 
Jens Weism\"uller$^\MakeLowercase{a,e}$,
Markus Wittmann$^\MakeLowercase{c}$, Barbara Wohlmuth$^\MakeLowercase{d}$}
\maketitle
\vspace*{-1.0em}
\begin{center}
\begin{scriptsize}
$^a$ Dept.~of Earth and Environmental Sciences, Ludwig-Maximilians-Universit{\"a}t M{\"u}nchen, 80333 M\"unchen, Germany\\
$^b$ Dept.~of Computer Science 10, FAU Erlangen-N{\"u}rnberg, 91058 Erlangen, Germany\\
$^c$ Erlangen Regional Computing Center (RRZE), FAU Erlangen-N{\"u}rnberg, 91058 Erlangen, Germany\\
$^d$ Institute for Numerical Mathematics (M2), Technische Universit{\"a}t M{\"u}nchen, 85748 Garching b.~M\"unchen, Germany\\[-1ex]
$^e$ Leibniz Supercomputing Center (LRZ), 85748 Garching b.~M\"unchen, Germany
\end{scriptsize}
\end{center}

\begin{abstract}
Matrix-free finite element implementations of massively parallel geometric
multigrid 
save memory and are often significantly faster than
implementations using
classical sparse matrix techniques. 
They are especially well suited for 
hierarchical hybrid grids on polyhedral domains.
In the case of constant coefficients all fine grid
node stencils in the interior of a coarse macro element are equal.
However, for non-polyhedral domains the situation changes. 
Then even for the Laplace operator, the non-linear element mapping leads to fine
grid stencils that can vary from grid point to grid point.
This observation motivates a new two-scale approach that 
exploits a piecewise polynomial approximation of the fine grid operator with
respect to the coarse mesh size.
The low-cost evaluation of these surrogate polynomials
results in an efficient 
stencil assembly on-the-fly for non-polyhedral domains
that can be significantly more efficient than 
matrix-free techniques that are based on an element-wise assembly.
The performance analysis
and additional hardware-aware code optimizations
are based on the Execution-Cache-Memory model. 
Several aspects such as two-scale a priori error
bounds and double discretization techniques are presented.  
Weak and strong scaling results illustrate the benefits of the new technique
when used within large scale PDE solvers.
\end{abstract}

\small \smallskip \noindent Keywords. two-scale, massively parallel multigrid, matrix free, on-the-fly assembly, ECM-model, scaling results, surrogate operator
\section{Introduction}
\label{sec:intro}
In many real-life applications, e.g., from structural mechanics, fluid dynamics, or geophysics, a simulation method may have to accommodate curved boundaries of the computational domain, 
see e.g.\ \cite{CHB09,BTT13,Kennett:2008:CUP,Igel:2016:CUP}. 
In particular, one of the grand challenges in geophysics are Earth 
mantle simulations where the domain is a thick spherical shell. 
Here, patch-wise defined isogeometric elements can achieve a high
accuracy per degree of freedom. 
However, to date no high performance implementation exists
on peta-scale supercomputer systems.

Classical low order finite elements
are well-established also as highly efficient parallel implementations.
Nevertheless, large scale applications often require a careful, memory aware realization.
For instance, the global resolution of the Earth
mantle with 1$\,$km already leads to a mesh with $10^{12}$ nodes.
Then the solution vector
for a scalar PDE operator requires 8 TByte of memory.
Such a large vector still fits well in the aggregate memory
of current peta-scale supercomputers such as SuperMUC that we
will use for our scalability experiments in Sec.\ \ref{sec:scaling}.

This becomes different, when we consider storing the stiffness matrix.
For linear tetrahedral elements on average  15 non-zeros are generated per matrix row.
Then, even in classical compressed row sparse (CRS) matrix format \cite{Barrett:1994:SIAM},
such a matrix of dimension $10^{12}$ will require 240 TByte of storage
and this may drive even large computers beyond their limits.
While developing
more efficient sparse matrix techniques is an active field of research,
see e.g.\ \cite{Kreutzer:2014:SISC, Guo01022016, Dalton:2015:OSM:2835205.2699470},
pre-computing and storing the entries of the 
stiffness matrix inevitably requires a vast amount of
memory and in turn produces massive memory traffic in the
solution phase of the algebraic system, \cite{2015_BiFaGrOlSc}.
We point out that transferring large amounts of data from memory
to the CPU is considered to be a critical cost factor, e.g.\ in terms
of energy consumption.
We also note that memory caching as well as spatial and temporal blocking
techniques can be used to  alleviate this problem \cite{douglas2000cache,kowarschik2002data}, 
but that their benefit is also limited.

To avoid the bottleneck of storing huge matrices, so-called matrix-free approaches 
\cite{Rietbergen1996computational,Bergen2004,
arbenz2008scalable,Kronbichler:2012:CAF,May:2015:CMAME}
must be employed when possible.
Here all matrix entries are re-computed
on-the-fly when needed.
We refer to \cite{baker2012scaling,baker-yang-2016,bastian2008generic,Bastian2014,falgout2002hypre,notay2015massively} 
for state of the art massively
parallel solvers for PDEs and to 
\cite{Bielak:2005:CMES,burstedde-stadler-alisic-wilcox-tan-gurnis-ghattas_2013,Rudi2015}
for large scale applications in geophysics.

For polyhedral domains, matrix-free methods
can exploit the structure of uniformly refined
mesh sequences.
Here simple on-the-fly assembly routines
can be developed
that  make use of the similarity of the finite elements in the refinement hierarchy.
In this case, the performance depends essentially on
 exploiting the redundancy in the mesh structure.
However, the situation is more complex for non-polyhedral domains.
A na\"{i}ve approach which ignores the physically correct domain
and resolves the geometry only with respect to a coarser mesh 
will result in a reduced accuracy.
The cheapest version, using
a fixed non-exact resolution of the geometry will, in general, 
result asymptotically in a loss of convergence.

Resolving the geometry by nodal low order interpolation 
and using affine element mappings, gives optimal order results for linear finite elements.
However, resolving the geometry more and more accurately with
increasing refinement level 
is computationally expensive,
since now  the refinement process generates new non-similar elements
and thus the local assembly process cannot profit to the full extent from 
the uniform refinement structure. 
These fundamental observations
are the reason why current matrix-free codes reach significantly better performance 
on polyhedral domains than in more general geometries.

The main contribution of this article is a novel technique that improves
the performance of matrix-free techniques in the case of 
complex geometries and curved boundaries without sacrificing accuracy.
To achieve this, we will propose and analyze
a new two-scale combination of classical interpolation 
with finite element approximations.

The outline of the paper is as follows: We start with the formulation
of the model problem and a brief discussion of hybrid hierarchical
grids in Sec.\ 2.
In Sec.\ 3, we introduce the new two-scale method based
on a higher order polynomial approximation of the low order
finite element stencil presentation.
We discuss two different variants using standard interpolation
and least square fitting techniques and comment on a priori estimates.
In Sec.\ 4, we focus on a study of the numerical accuracy and cost considerations.
The influence of the  polynomial degree in the stencil approximation
on the accuracy as well as the effect of a double discretization
\cite{Brandt2011} used for residual computation and smoother application is illustrated.
Sec.\ 5 is devoted to the kernel optimization as it is essential for large scale 
high performance computations.
The analysis is based on an enhanced Execution-Cache-Memory (ECM) model.
Finally in Sec.\ 6, 
we present weak and strong scaling results on a current peta-scale architecture.

\section{Model problem and hybrid hierarchical meshes}
\label{sec:model}

Matrix-free implementations of finite element schemes for partial differential equations are attractive on modern heterogeneous architectures. However, special care is required in the case of PDEs on domains with 
curvilinear boundary surfaces or interfaces.
Here we propose a novel two-scale scheme that is perfectly suited to
large scale matrix-free geometric multigrid solvers for low order
conforming finite element discretizations.

\subsection{Model problem}
We consider the Laplace equation with homogeneous Dirichlet boundary conditions on a spherical shell 
as our model problem 
\begin{equation}
\label{eq:model_eq}
-\Delta u = f\quad\text{in }\Omega\quad\text{and}\quad u = 0\quad \text{on } \partial\Omega
\end{equation}
where $\Omega := \{(x,y,z) \in \RR^3 : r_1^2 < x^2 + y^2 + z^2 < r_2^2\}$ with $0 < r_1 < r_2$ and
$f \in L^2(\Omega)$.
But we point out, that our approach
can be also applied  for more complex systems of PDEs,  variable coefficients PDEs and general geometries. 

The standard Galerkin formulation of \eqref{eq:model_eq} reads as: find $u \in H_0^1(\Omega)$ which satisfies
\begin{equation}\label{eq:model_galerkin}
a(u,v) = (f,v) \qquad \forall v \in H_0^1(\Omega) 
\end{equation}
where the bilinear and linear forms are given by
\begin{equation*}
a(u,v) := \langle \nabla u, \nabla v \rangle_{\Omega}
\enspace,\quad
(f,v) := \langle f,v \rangle_{\Omega},
\end{equation*}
for all $u,v \in H^1_0(\Omega)$.

\subsection{Domain and finite element approximations}
If a  polyhedral domain is considered, the input mesh can be selected
such that the domain is equal to the union of all elements of the
mesh. Thus, the domain can be represented exactly.
However, for our model domain of a spherical shell this is
not possible when using tetrahedra that are affinely mapped from the
reference tetrahedron $\hat T$ with nodes $(0,0,0)$, $(1,0,0)$, $(0,1,0)$
and $(0,0,1)$. We use two different sequences of triangulations.
Both of them are based on a possibly unstructured simplicial initial mesh
$\T_{-2}$ of meshsize $H$ defining $\overline \Omega_H := \cup_{T \in \T_{-2}} \overline T$.
We start by constructing a hierarchy of uniformly refined triangulations
$\T := \{\T_\ell,\, \ell=0,1,\dots,L\}$ based on successive regular refinement
\cite{Bey95}.

\begin{figure}\centering
\begin{tabular}{cc}
\includegraphics[width=0.2\textwidth]%
{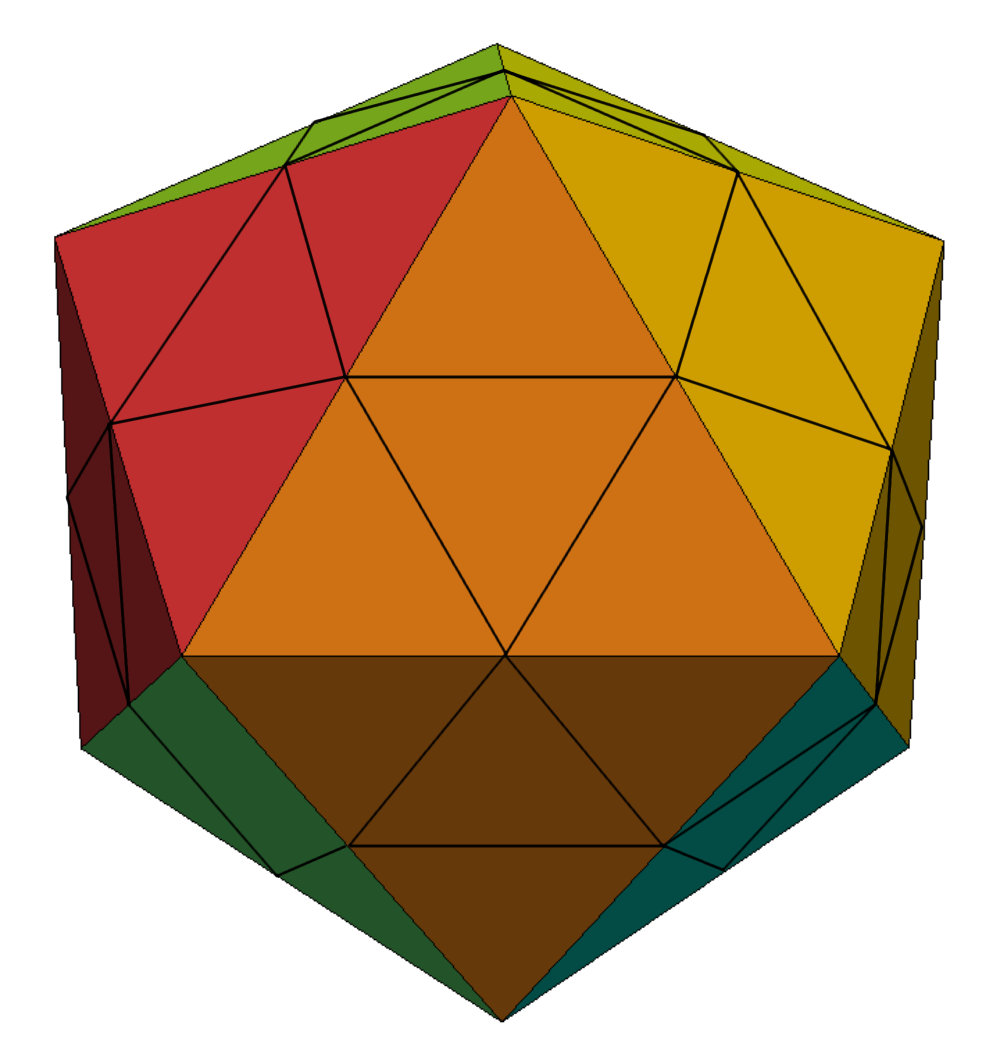} &
\includegraphics[width=0.2\textwidth]%
{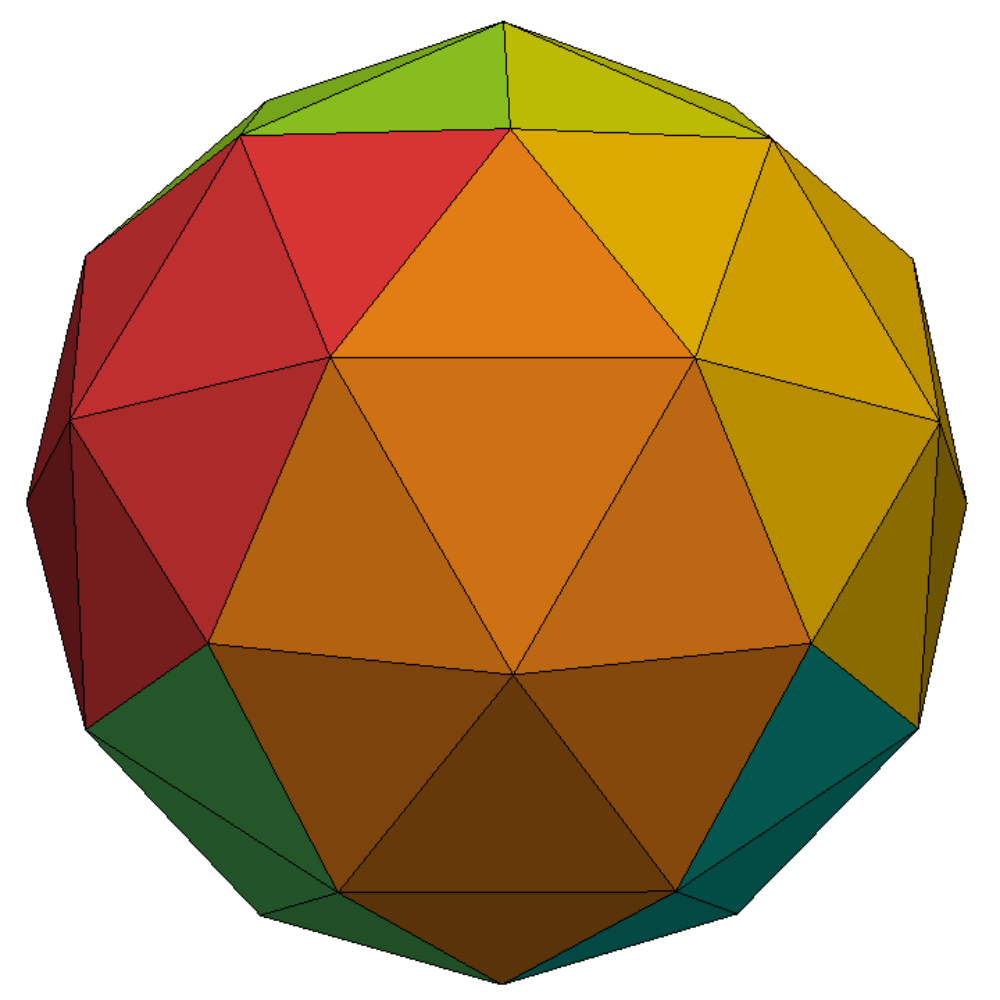} \\
\end{tabular}
\caption{Approximation of spherical shell with $\T_{-1}$ (left) and $\tilde{\T}_{-1}$ (right).
\label{fig:domainApproximation}}
\end{figure}

We refer to the elements of $\T_{-2}$ as \emph{macro elements} and point out
that our notation guarantees that each macro element has at least one inner
node for every mesh in the sequence $\T $. The level mesh-size is given by
$h_\ell = 2^{-(\ell+2)} H$.
For each mesh $\T_\ell \in \T$, we find
$\overline\Omega_H =\cup_{T \in \T_{\ell}} \overline T$.
Consequently, the quality of the geometry approximation is fixed and does not
improve with increasing refinement level. Therefore in a subsequent step,
we project all refined nodes onto the spherical surface.

\begin{figure*}\centering
\includegraphics[width=0.9\textwidth]%
{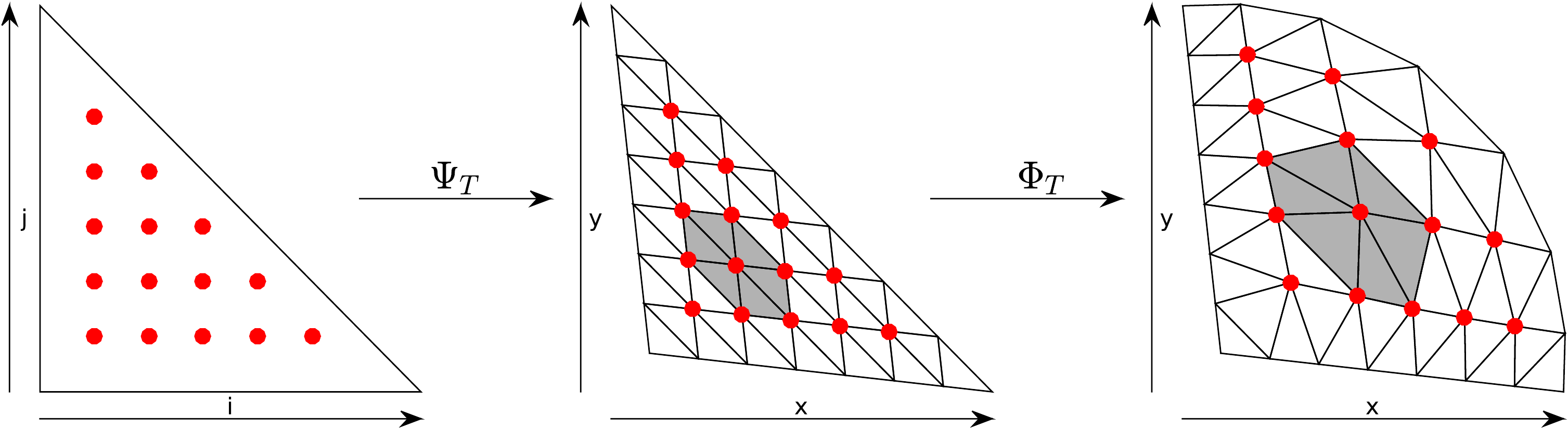}
\caption{Sketch of  the different macro element mappings. 
In the reference domain (left) nodes are associated with the discrete indices
$(i,j,k)$. Here $k$ is fixed to get a 2D clipping and we call it an index plane. Firstly the reference domain is affinely mapped by  $\Psi_T$ 
to the physical coordinates. This is the standard 
HHG domain (center). Secondly,
the macro element $T$ is mapped by $\Phi_T$ to the projected  element (right).
\label{fig:ref_hhg_projected_domain}}
\end{figure*}

This is schematically illustrated for one ma\-cro element by a  2D clipping in
Fig.~\ref{fig:ref_hhg_projected_domain}. 
Firstly, the reference element $\hat T$ is affinely mapped by $\Psi_T$
onto a macro element $T$ in $\T_{-2}$. Secondly, $T$ is mapped by
a blending function $\Phi_T$ reflecting the domain of interest. In case of 
a polyhedral domain, $\Phi_T$ can be selected as identity and in more general
but piecewise smooth settings, we have $\|(\Phi_T - {\rm Id}) \circ \Psi_T\|_\infty = {\mathcal O} (H^2)$.
We refer to  \cite{FMW05} for an explicit proof in 2D if $\Phi_T$ is defined by a smooth surface representation. Note that in our case the mapping $\Phi_T$ is also done 
for the interior elements of $\Tt_{-2}$, such that they form spherical layers.

Now, we define the global mapping $\Phi$ elementwise by $\Phi|_T := \Phi_T$ and assume that $\Phi$ is globally continuous and $\Phi(\Omega_H) = \Omega$.
In terms of $\Phi $, we obtain a second family $\Tt := \{\Tt_\ell\}_{_\ell}$
of simplicial triangulations as follows:  The vertices   of $\Tt_\ell$ 
are obtained by an application of $\Phi $ on the vertices of $\T_\ell$,
 and the edge graphs are topologically equivalent to the ones of
$\T$, i.e., the node connectivity remains unchanged. Under the assumption that  $\Phi$  is a piecewise smooth bijection, $\Tt$ satisfies a uniform shape regularity and standard a priori finite element estimates holds. Moreover the  vertices of
$\Tt_\ell $ are also vertices of $\Tt_{\ell+1}$.
However the midpoints of the edges on level $\ell$ are, in general, not the new vertices on level $\ell+1$. Consequently the  arising sub-elements are not similar to one of the three sub-element classes which occur, if one tetrahedron is uniformly refined, see also Fig. \ref{fig:uniform}.

Typically, low order finite element approximations are then based on the mesh sequence $\Tt$ and not on $\T$. 
The  standard finite element space is defined as
\begin{equation*}
V_\ell = \{ v \in H_0^1(\Omega_\ell) :  v|_T \in P_1(T),~\forall \, T \in \Tt_\ell \},
\end{equation*}
where $\overline \Omega_\ell := \cup_{T \in \Tt_{\ell}} \overline T$. We point out  that $\Omega_\ell$ forms a sequence of domains approximating $\Omega$ and that due to the domain approximation $V_\ell \not\subset V_{\ell+1} $. The
 discrete Galerkin formulation of \eqref{eq:model_galerkin} then  reads as: find $u_\ell \in V_\ell$ that satisfies
\begin{equation}
a_\ell(u_\ell,v_\ell) = \langle f,v_\ell \rangle_{\Omega_\ell} \qquad \forall v_\ell \in{V}_\ell 
\end{equation}
where  $a_{\ell}(\cdot,\cdot) := \langle \nabla \cdot ,
\nabla \cdot  \rangle_{\Omega_\ell}$.
Let $\{\phi_I\}_I$ be the nodal basis of $V_\ell$; for simplicity  we ignore in the notation of $\phi_I$ the level index $\ell$, if there is no ambiguity.   
Then this transforms to the algebraic system 
\begin{equation}
\label{eq:algebraic}
\L_\ell \mathbf{u}_\ell = \mathbf{f}_\ell
\end{equation}
where $\mathbf{u}_\ell$ denotes the vector of nodal values of the finite 
element approximations, i.e., $u_\ell = \sum_I (\mathbf{u}_\ell)_I \phi_I$,
and $ \mathbf{f}_\ell$ stands for the algebraic representation of the 
linear form
$(\mathbf{f}_\ell)_I = \langle f, \phi_I\rangle_{\Omega_\ell}$.
As it is standard, the entries of $\left(\L_\ell\right)_{I,J}$ are
given by $a_\ell(\phi_I,\phi_J)$. The standard
matrix formulation \eqref{eq:algebraic} can be related to a representation
of $\L_\ell$ in stencil form, i.e., each line of the matrix is associated with a node $I$, and its non-zero entries form the weights of the associated node stencil $s^I$.

\subsection{Hybrid hierarchical mesh stencils}   
Massively parallel and highly scalable PDE software frameworks depend on a sophisticated data structure based on ghost layers. To reduce communication and memory traffic, the abstract concept of different hierarchical primitives can be applied. Here we use the hybrid hierarchical grids (HHG) framework which provides an excellent
compromise between flexibility and performance \cite{Bergen2004,Bergen:2005:PhD,Gmeiner2015}. While the coarse mesh
can be completely unstructured, the fine mesh
is obtained by uniform refinement following the rules given in
\cite{Bey95}. Each element is refined into eight sub-elements each of which
belongs to one of three different sub-classes, see the left of
Fig. \ref{fig:uniform}. All nodes $I$ of the triangulation can then  be grouped into vertex, edge, face or volume primitives
depending on their position within one macro element. The vertex, edge and face primitives form the lower primitive classes.
 Such a structured block
refinement then guarantees in
the case of affine element mappings that the node stencils for partial
differential equations with constant coefficients do not depend on the
node location within one primitive. More precisely for each macro
element, all volume primitive stencils are exactly the same and are
given by a symmetric 15-point
stencil, 
see the right of Fig. \ref{fig:uniform}.

\begin{figure}[ht]
\centerline{
\includegraphics[width=0.24\textwidth]{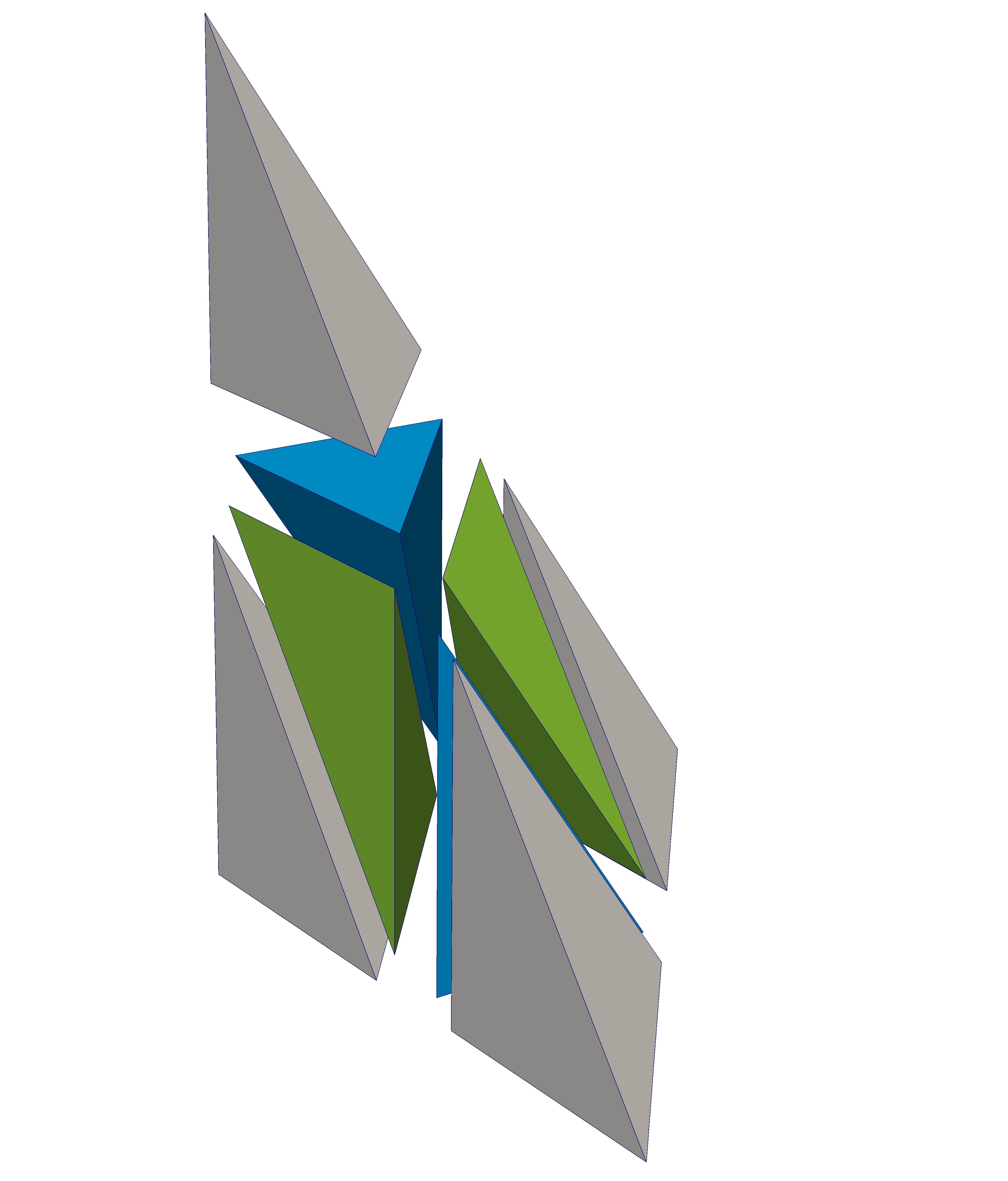} 
\includegraphics[width=0.24\textwidth]{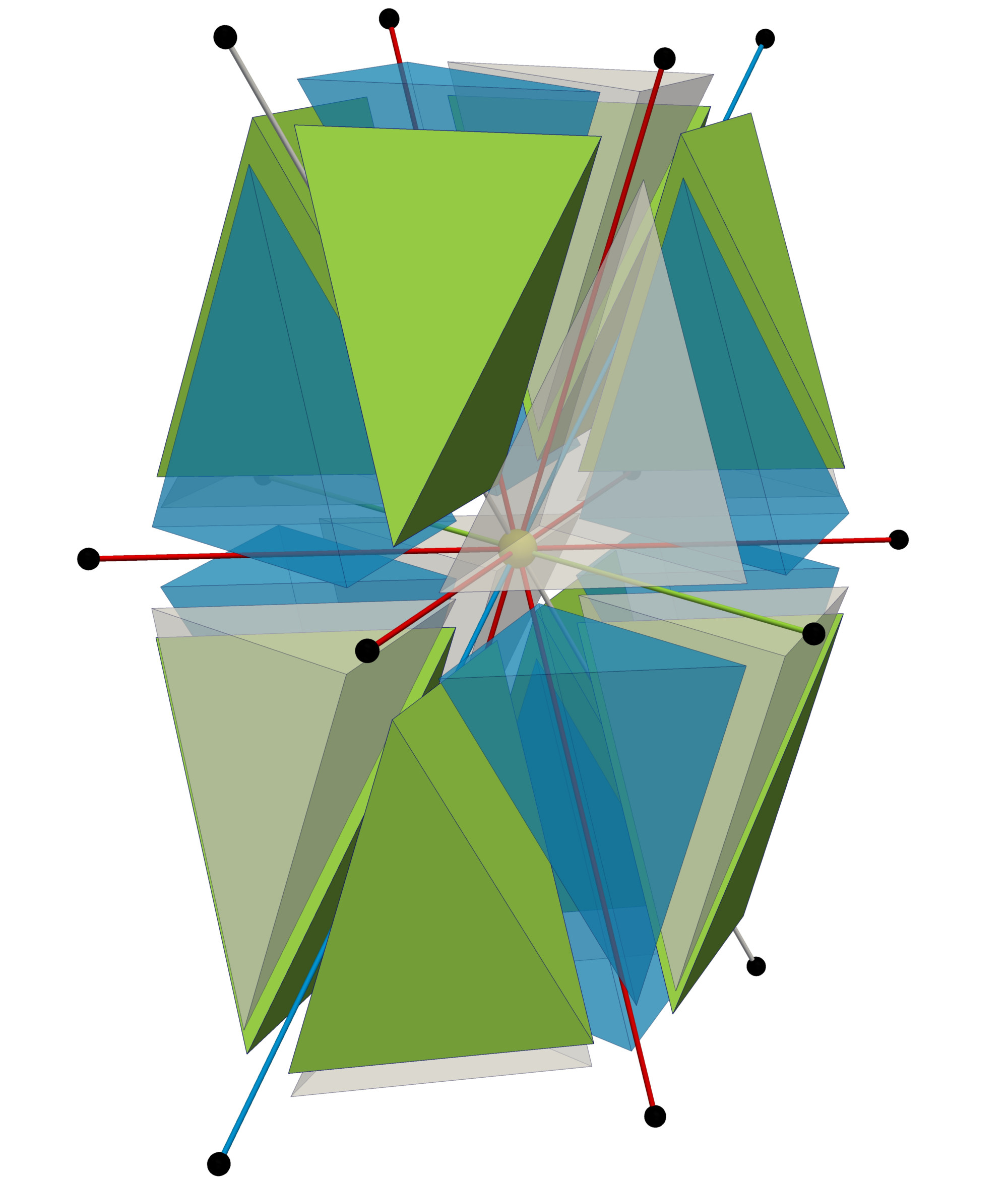} }
\caption{15-point stencil (right) and the three classes of sub-tetrahedra (left). \label{fig:uniform}}
\end{figure}

 In large scale applications, the number of nodes in the volume primitive containers is significantly higher than the number of nodes in the lower primitive classes. It grows with
${\mathcal O} (2^{3\ell})$ whereas the  number of remaining nodes increases only
with ${\mathcal O} (2^{2\ell})$. Since we are interested in large scale
simulations, the computational cost associated with the node stencils on the lower primitives is rather small compared to the cost associated with the nodes in the volume primitives. Thus we 
 compute on-the-fly the nodes stencils for nodes in a lower primitive class
 in a straightforward way
by reassembling the contributions of local stiffness matrices. Only the on-the-fly computation of the node stencils for nodes in a volume primitive class will be replaced by a surrogate stencil operator.
We recall that the volume primitives are associated with the macro elements.
 Let $T \in \T_{-2}$ be fixed, then we denote by
 $\mathcal{M}_\ell$ the set of all interior points of $T$. These nodes are within the volume primitive associated with $T$, at refinement level
$0 \leqslant \ell \leqslant L$. For $\ell = 0$ this consists of exactly one
single point. The number $n_\ell$
of elements in $\mathcal{M}_\ell$ is given by
\begin{equation}
n_\ell = \frac {\left(2^{\ell+2} - 3\right) \left(2^{\ell+2} - 2\right)
 \left(2^{\ell+2} - 1\right)}{6} .
\label{eqn:numInteriorPoints}
\end{equation}

\begin{remark}
We point out that all our techniques can also be applied to the construction of
stencil entries associated with face and edge nodes. This is of interest in case
 of    moderate level hierarchies. 
\end{remark}

For a fixed node $I$, the stencil weight for accessing another node $J$ is
given by $a_\ell(\phi_I,\phi_J)$ and will only be non-zero, if both nodes are
connected by an edge in $\T_\ell$.
In case of our 
uniform mesh refinement, the stencil weights can be
identified by their cardinal directions $w \in \mathcal{W} = \{be,bc,...,mc\}$,
where, as noted before, we have $|\W|=15$.
The first character in $w$ denotes the bottom, middle and top plane whereas the
second specifies the orientation within the plane, as illustrated in
Fig.~\ref{fig:stencilHHG}.  
Then the stencil for node $I$ can be represented by a vector 
$s^I=(a_\ell(\phi_I,\phi_{J_{be}}),$ \ldots ,$a_\ell(\phi_I,\phi_{J_{mc}}))$ 
in $\RR^{15}$.
We note that $J_{mc} =I$.

\begin{figure}[t]
\centering
\includegraphics[trim=178pt 145pt 759pt 180pt, clip=true, width=0.4\textwidth]{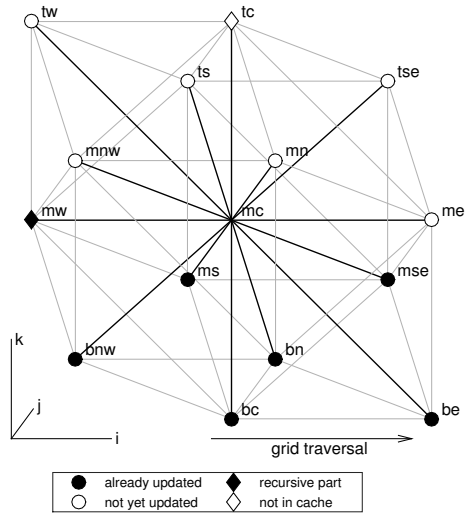} 
\caption{Layout of a typical low order 15pt stencil; see also
\cite{Bergen:2005:PhD}; node symbols will be relevant in Sec.~\ref{sec:kernel_opt}. \label{fig:stencilHHG}}
\end{figure}

Moreover there exists a unique mapping between the  nodes in ${\mathcal M}_\ell$ and the
set of index triples ${\mathcal I}_\ell $ given by
\begin{equation*}
\{(i, j, k); 0 < i,j,k < 2^{\ell+2} - 2, i + j + k < 2^{\ell+2} \}.
\end{equation*}
Hence the node stencils on level $\ell$ can be associated with a stencil
function $s: {\mathcal I}_\ell \rightarrow \RR^{15}$, i.e.~$s^I = s(i_I,j_I,k_I)$ where
$(i_I, j_I,k_I)$ is the index triple associated with node $I$.
If $\Phi = \text{Id} $ then all $s^I $, $I \in {\mathcal M}_\ell $ are the same, 
i.e., $s$ is a constant function. Moreover it is obtained by simple
scaling from a reference stencil $\hat s$, i.e., $s^I = 2^{-\ell} \hat s
$, $I \in {\mathcal M}_\ell$,
with $\hat s $ being the stencil associated with the single node in ${\mathcal M}_0$.
This is not the case for our model domain. Here
$s$ is a non-constant function
and consequently all stencil weights have to be  computed
on-the-fly. To do so for each node $I \in {\mathcal M}_\ell$, we have to
consider the 24 tetrahedra adjacent to it and their associated local stiffness
matrices which can be different from element to element due to a general $\Phi$.
Thus this process, although linear with respect to $n_\ell$, can be rather cost
intense.
For $L=6$ we have to compute on-the-fly $2,731,135$ different stencils per
macro element.

\begin{figure}[ht]
\centering
\includegraphics[width=0.25\textwidth]{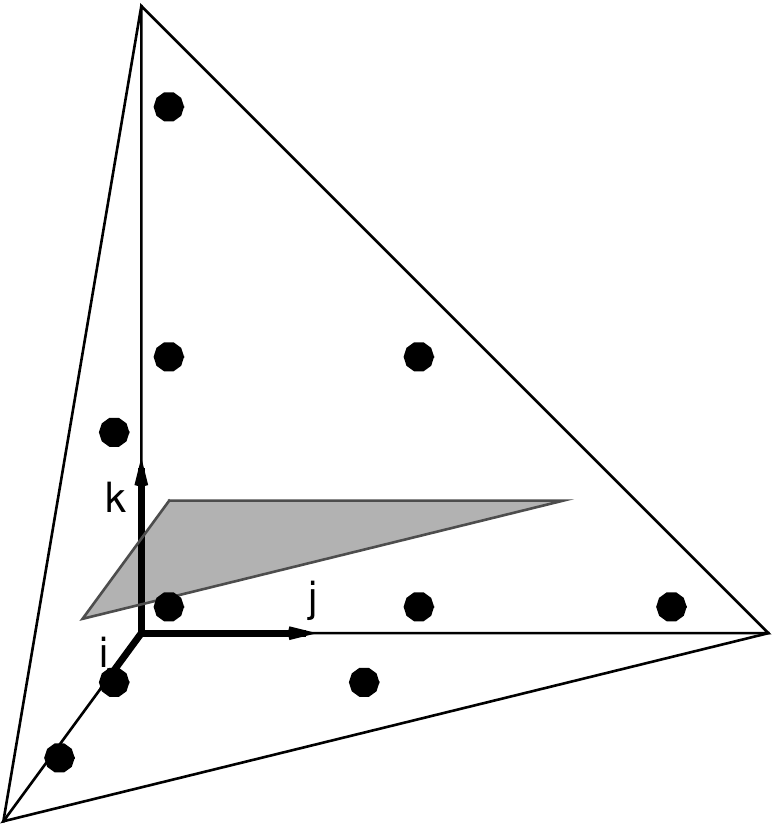} 
\caption{Uniformly refined reference tetrahedron. Highlighted plane shows the
location of nodal centers for the stencils visualized in
Fig.~\ref{fig:stencilCCvsVC} and Fig.~\ref{fig:stencilVCvsINTvsAPPROX}.
Black dots mark the ten sampling points for \ip (Sec.~\ref{subsec:ipoly}).%
\label{fig:tetplane}}
\end{figure}

In Fig.~\ref{fig:stencilCCvsVC}, we illustrate the influence of $\Phi$ on the
node stencil restricted to an index plane, see Fig. \ref{fig:tetplane}.
Note that for $\Phi = \text{Id}$ not only the stiffness matrix but
also the stencil is symmetric, i.e., opposite
stencil entries, e.g., \textit{tw} and \textit{be}, are identical.
While this kind of symmetry is not completely preserved in the more general case, it
is still reflected. Hence,
we show only seven stencil entries plus the central one in Fig.~\ref{fig:stencilCCvsVC}.
One can clearly see that the stencil entries are smooth functions that can be easily approximated by polynomials. This observation motivates
our new approach.

\begin{figure*}\centering
\begin{tabular}{@{}cc@{}}
\includegraphics[trim=0pt 0pt 0pt 0pt, clip=true, width=0.475\textwidth]%
{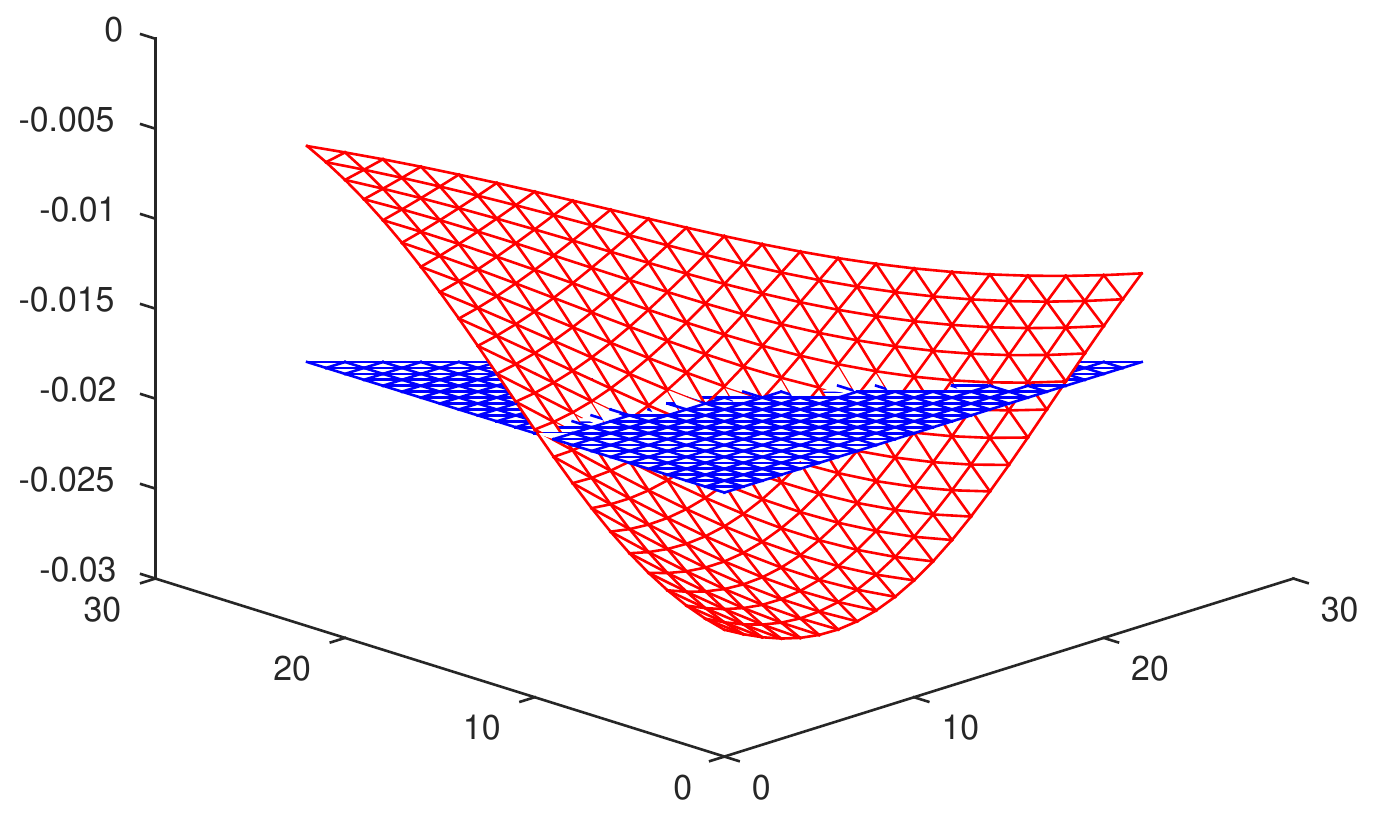} &
\includegraphics[trim=0pt 0pt 0pt 0pt, clip=true, width=0.475\textwidth]%
{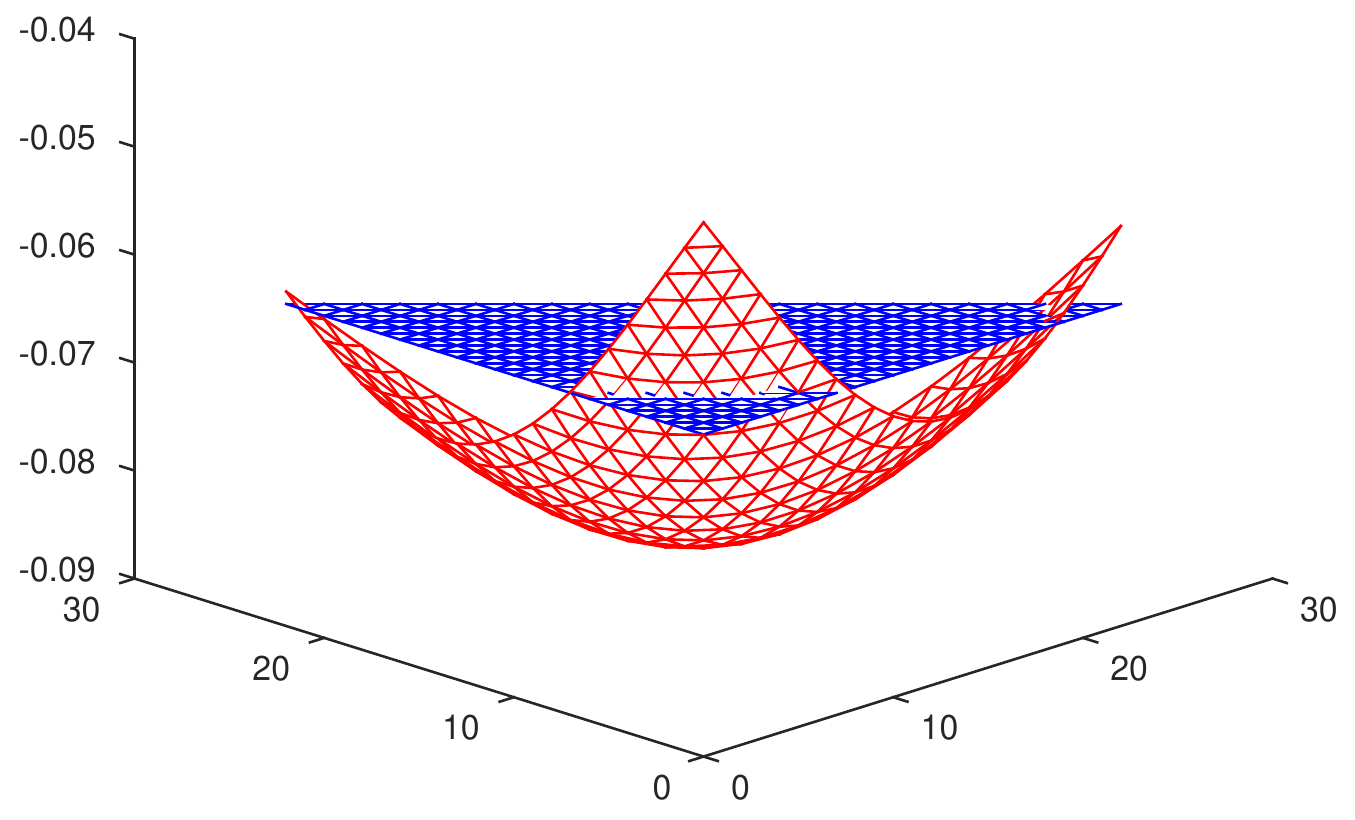} \\
 (a) $w=tc$ & (b) $w=ts$  \\

\includegraphics[trim=0pt 0pt 0pt 0pt, clip=true, width=0.475\textwidth]%
{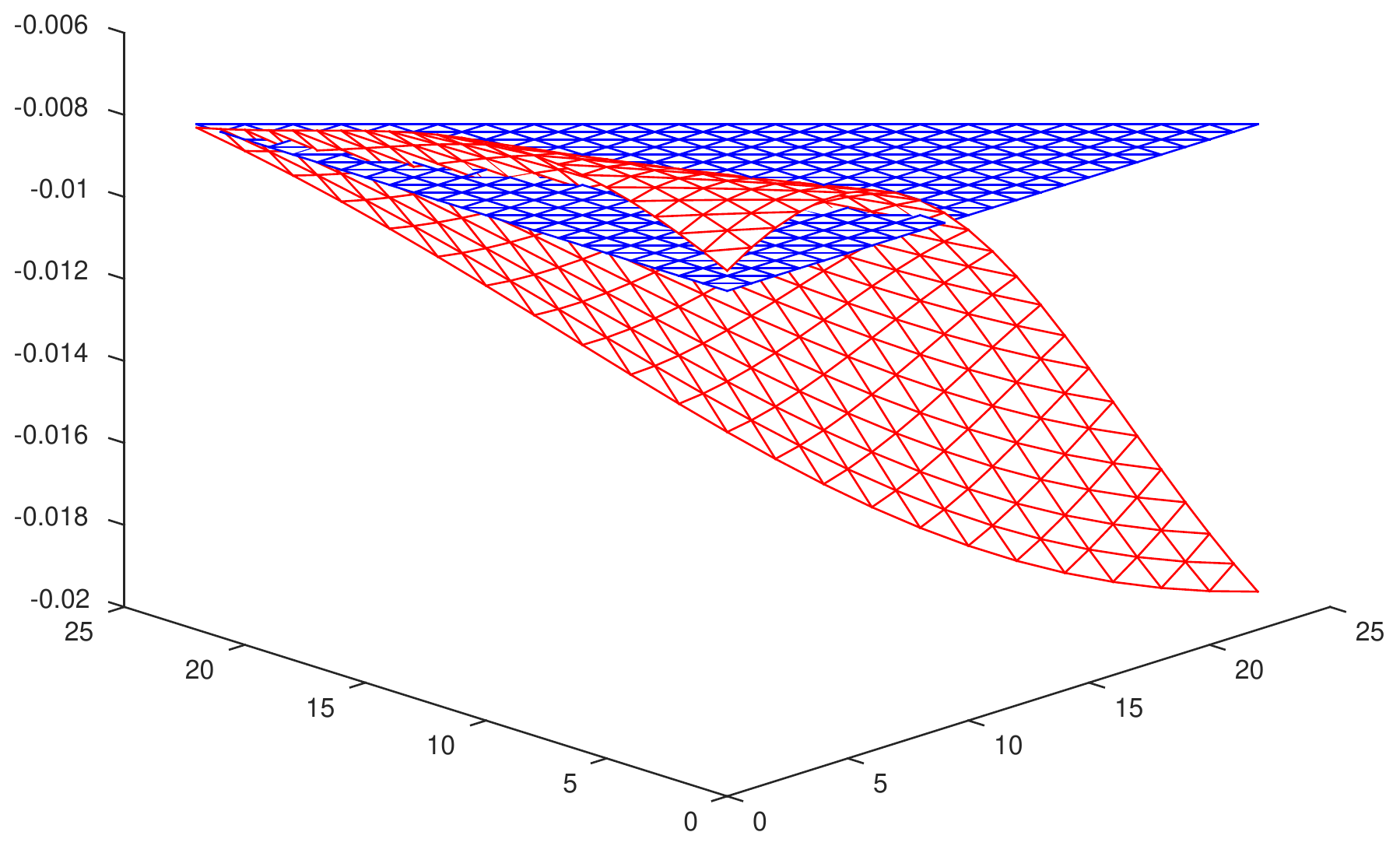} &
\includegraphics[trim=0pt 0pt 0pt 0pt, clip=true, width=0.475\textwidth]%
{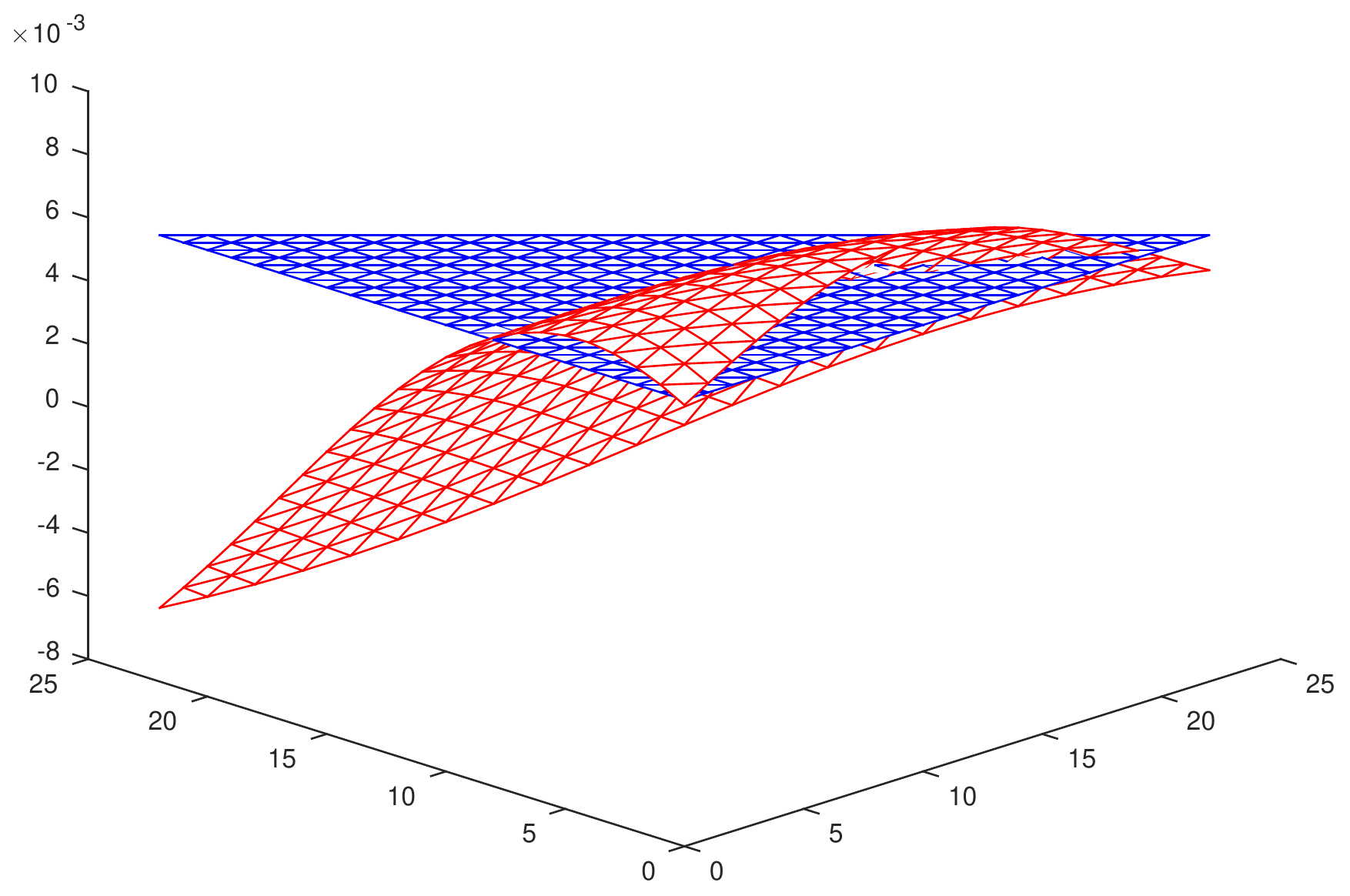} \\
(c) $w=mw$ & (d) $w=mn$  \\

\includegraphics[trim=0pt 0pt 0pt 0pt, clip=true, width=0.475\textwidth]%
{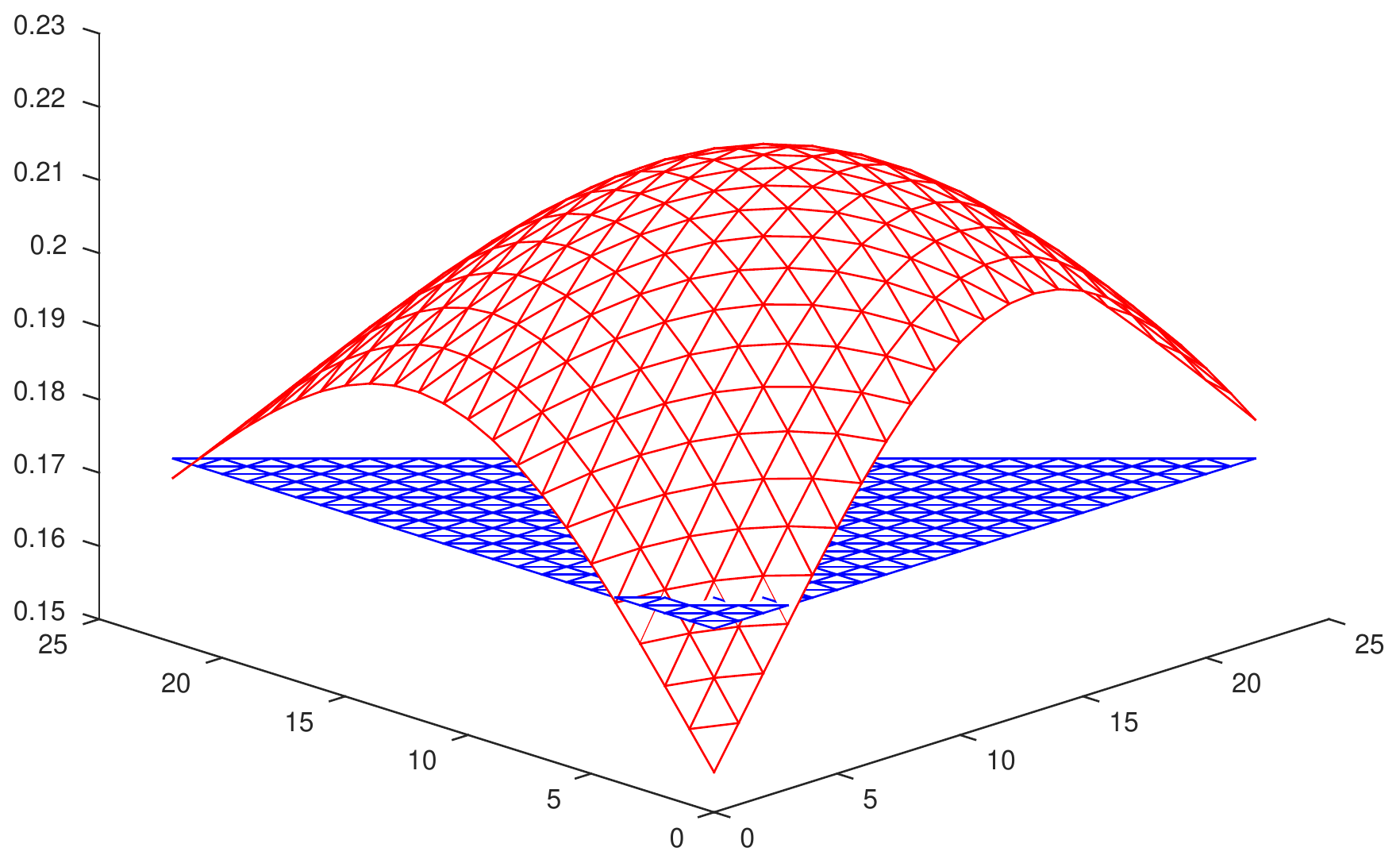} &
\includegraphics[trim=0pt 0pt 0pt 0pt, clip=true, width=0.475\textwidth]%
{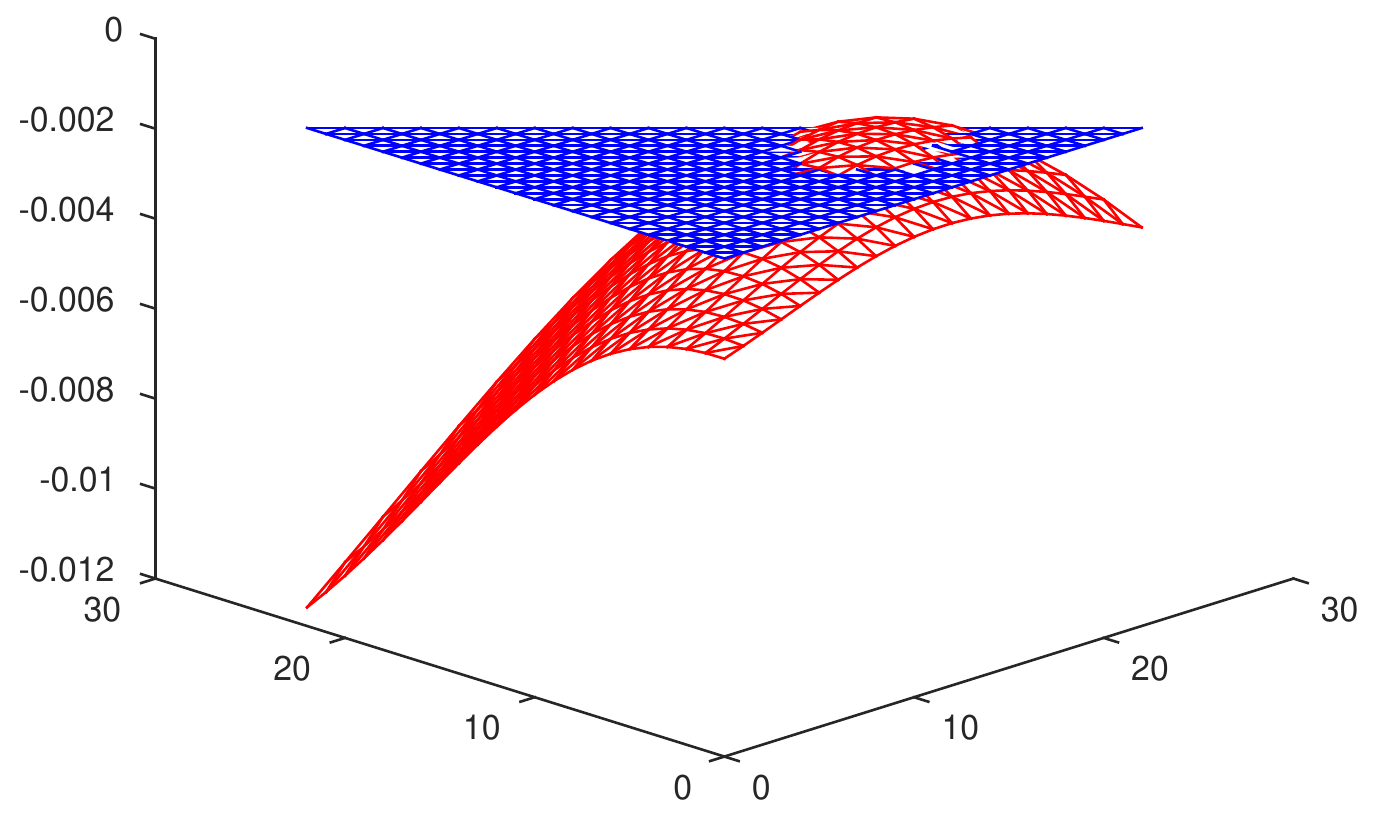} \\
 (e) $w=mc$  & (f) $w=mse$  \\
 
\includegraphics[trim=0pt 0pt 0pt 0pt, clip=true, width=0.475\textwidth]%
{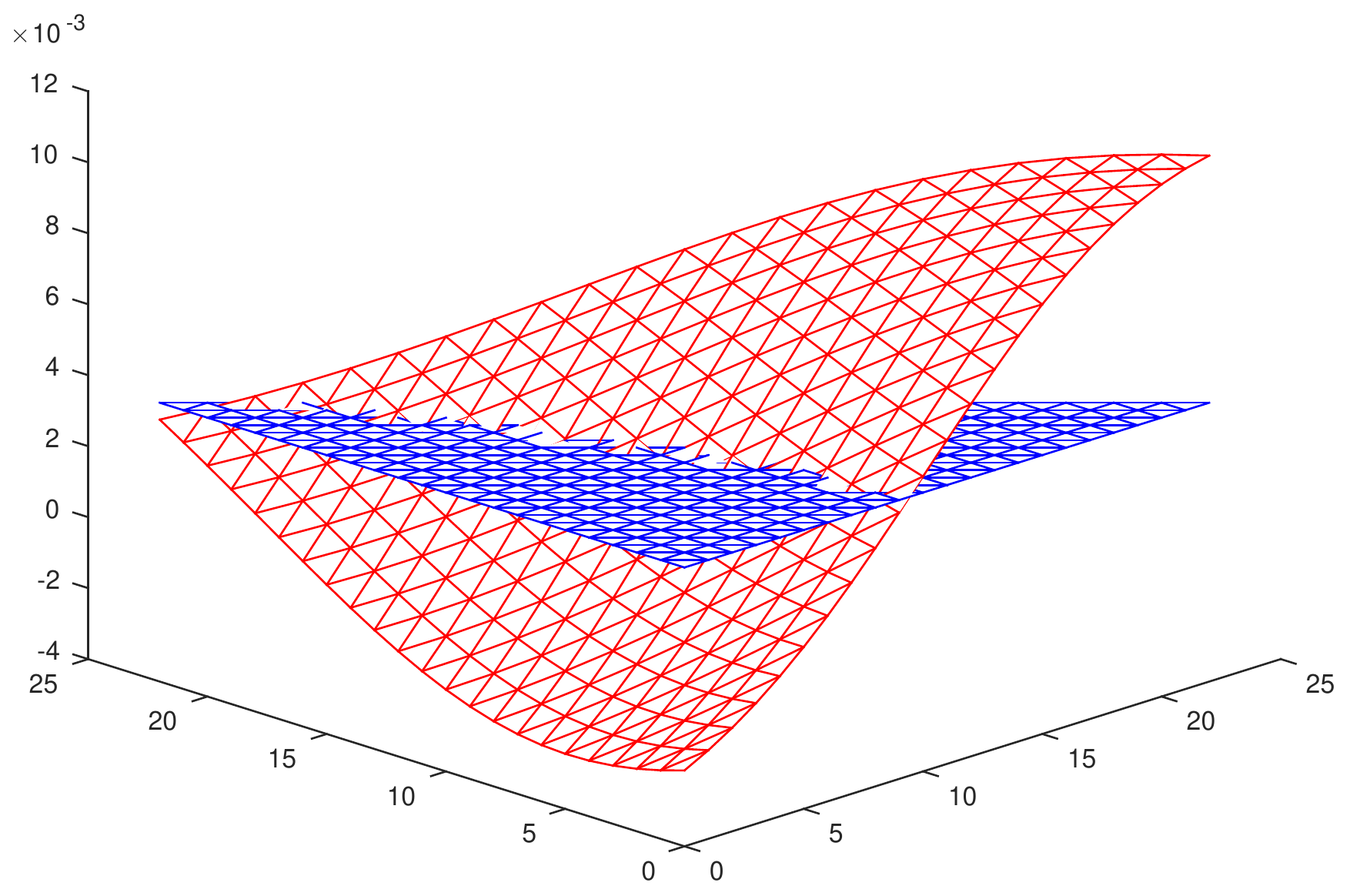} &
\includegraphics[trim=0pt 0pt 0pt 0pt, clip=true, width=0.475\textwidth]%
{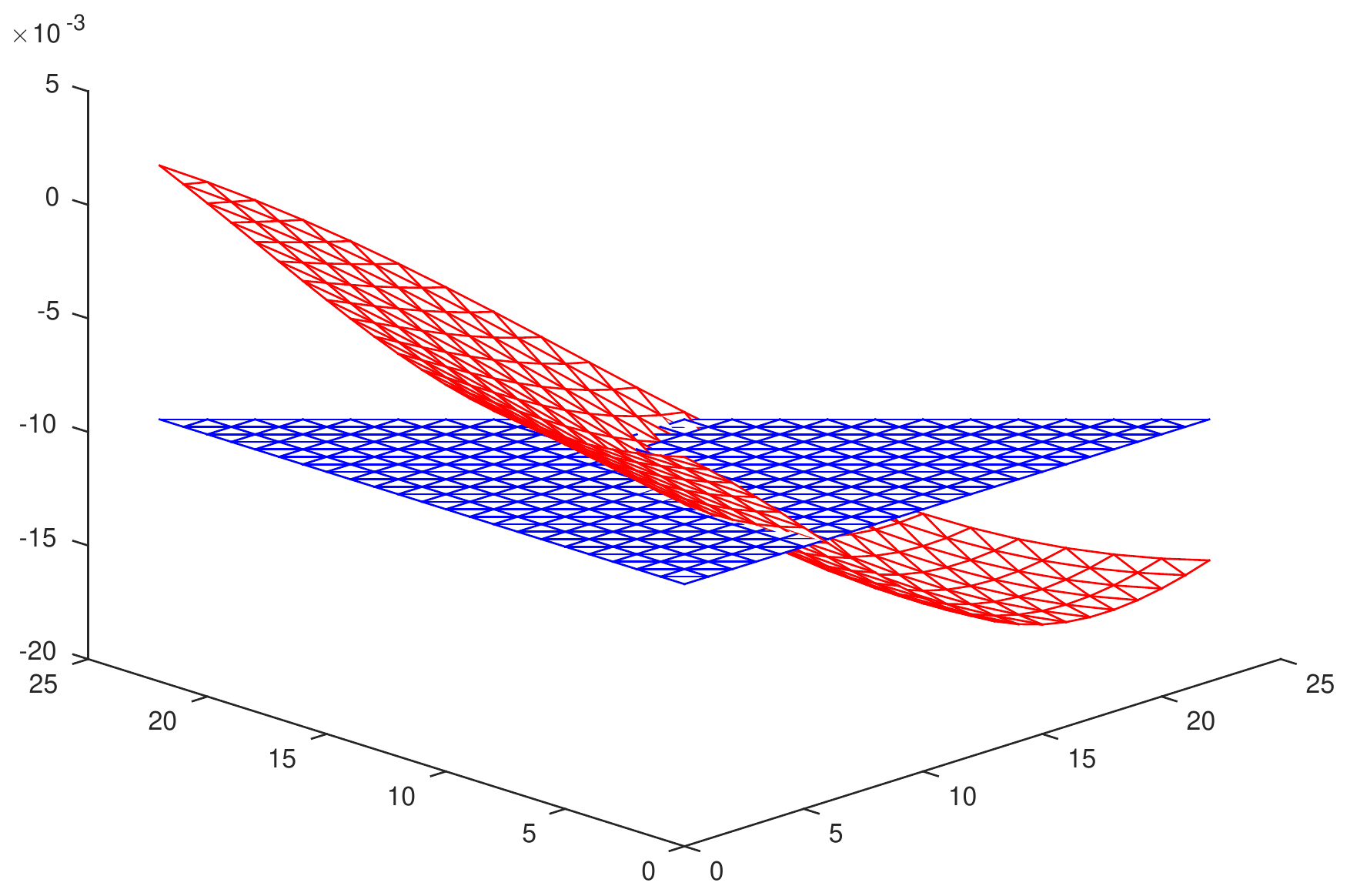} \\
 (g) $w=bnw$  & (h) $w=be$
\end{tabular}
\caption{Eight out of the 15 stencil values associated with the uniformly refined mesh series $\T$ (blue) and the projected series $\Tt$ (red). Each subplot shows the stencil values of one $w \in \W$
for all fine grid nodes in the plane $k=7$ of a macro element
at refinement level $\ell=3$.
Here the x,y location corresponds to the i,j index of the nodes, and the z-direction 
shows the stencil value. The fixed  $k$ index is chosen such that it contains the median node.
\label{fig:stencilCCvsVC}}
\end{figure*}

\section{Two-scale approach}
\label{sec:interpolation_approach}
Our idea now is to replace the components of the exact stencil function $s$
associated with a volume primitive node by surrogate polynomials of
moderate order.
More precisely, we consider for each cardinal direction $w \in {\mathcal W}$
a polynomial defined either by interpolation or
alternatively by a discrete $L^2$-best fit.
 We will only perform
the replacement of the stencil function on levels $\ell\geqslant 1$ as on
the coarsest level $\ell = 0$ a macro element has only one interior node with
a single associated stencil.

\subsection{Interpolation of stencils (IPOLY)}
\label{subsec:ipoly}
A tri-variate polynomial of degree less or equal to $q$ can be re\-presented as
\begin{equation}
\label{eq:quadraticPolynomial}
p(i,j,k) = \sum_{\substack{l,m,n=0\\l+m+n\leqslant q}} a_{l,m,n}
\lambda_{l,m,n} (i,j,k)
\enspace,
\end{equation}
where the $\lambda_{l,m,n}$ form a basis.
Solving for the coefficients $\mathbf{a}= (a_{q00},..., $ $ a_{000}) \in
\RR^{m_q}$ where
\begin{equation}\label{eqn:numPolyCoeffs}
m_q = \frac 16 (q+3)(q+2)(q+1)
\end{equation}
of an interpolation polynomial in general leads to a linear system
$A \mathbf{a} = \mathbf{b}$ with a Vandermonde-type matrix
$A \in \RR^{m_q\times m_q}$ and sampling values $\mathbf{b}\in\RR^{m_q}$.
The special choice of a Lagrange interpolation basis yields $A = \text{Id}$.
As example, we consider the case $q=2$. Here we have $m_2=10$ and for sampling we use
the quadratic nodal interpolation points of a shrunk macro element.
 More precisely we use the nodes with indices 
\begin{gather*}
(1,1,1), \enspace (1,1,a),\enspace (1,a,1),\enspace (a,1,1), \\
(1,1,b),\enspace (1,b,1),\enspace (b,1,1), \\
(1,b,b),\enspace (b,1,b),\enspace (b,b,1)
\end{gather*}
where $a=2^{\ell+2} -3$, $b=2^{\ell+1} -1$
as sampling points, see also Fig.~\ref{fig:tetplane}.

\subsection{Least-squares approximation (LSQP)}
\label{subsec:approximation}

Applying an interpolation polynomial implies that the polynomial is exact at
the sampling points, which elevates these points. 
Alternatively, we can use a discrete $L^2$-best approximation polynomial.
This leads to a system with an $n_\ell\times m_q$ matrix, as we now use
all points of $\mathcal{M}_\ell$ as our sampling set $\mathcal{S}_\ell$.
More generally, we define the coefficients $\mathbf{a}$ of the approximating
polynomial as solution of the least-squares problem
\begin{gather*}
\mathbf{a} := \argmin_{\mathbf{z}\in\RR^{m_q}}\|A \mathbf{z} - \mathbf{b}\|_2
\enspace,\\
A\in\RR^{|\mathcal{S}_\ell|\times m_q}\:,\, \mathbf{b}\in\RR^{|\mathcal{S}_\ell|}
\enspace.
\end{gather*}
Each row of $A$ is defined by the values of the
basis functions $\lambda_{l,m.n} $ evaluated at the corresponding  sampling point. The choice $\mathcal{S}_\ell = \mathcal{M}_\ell$
quickly becomes quite expensive, due to the fast growth of $n_\ell$ with
$\ell$. Thus we also consider reduced sampling sets
 $S_\ell := \mathcal{M}_{m(\ell,j)}$ with
\begin{equation}\label{eqn:samplingSetFunctions}
m(\ell,j) := \min\{\ell,\max\{1,j\}\}
\end{equation}
where $j$ may depend on $\ell$. The min-max definition guarantees that we neither 
exceed the largest possible set, i.e., $ \mathcal{M}_\ell$ on level $\ell$, nor
go below $\mathcal{M}_1$. For simplicity of notation, we also denote $
\mathcal{M}_{m(\ell,j)}$ by $\mathcal{M}_j$ if there is no ambiguity.

We note that for $q\leqslant 4$, the matrix $A$ has full rank for any choice of
$S_\ell=\mathcal{M}_k$ with $k\in\{1,\ldots,\ell\}$. This is easily seen by
comparing \eqref{eqn:numInteriorPoints} and \eqref{eqn:numPolyCoeffs} and considering
the spatial distribution of the nodes in the mesh. Consequently the least-squares
problem will then have a unique solution.

\subsection{IPOLY versus LSQP for quadratic surrogate polynomials}
In a first test, we compare the difference between the least-squares technique for ${\mathcal S}_\ell = {\mathcal M}_{\ell-1}$  and the interpolation based approach, see Fig.~\ref{fig:stencilVCvsINTvsAPPROX}.
Note that, as in Fig.~\ref{fig:stencilCCvsVC}, the points visualized are chosen
to be in the plane with $k=7$. Hence the displayed corners are
no sampling points for \ip. While the values of \ip are more accurate close to
the boundary,  the values of \ax have an
uniformly better fit.

\begin{figure*}\centering
\begin{tabular}{@{}cc@{}}
\includegraphics[width=0.475\textwidth]%
{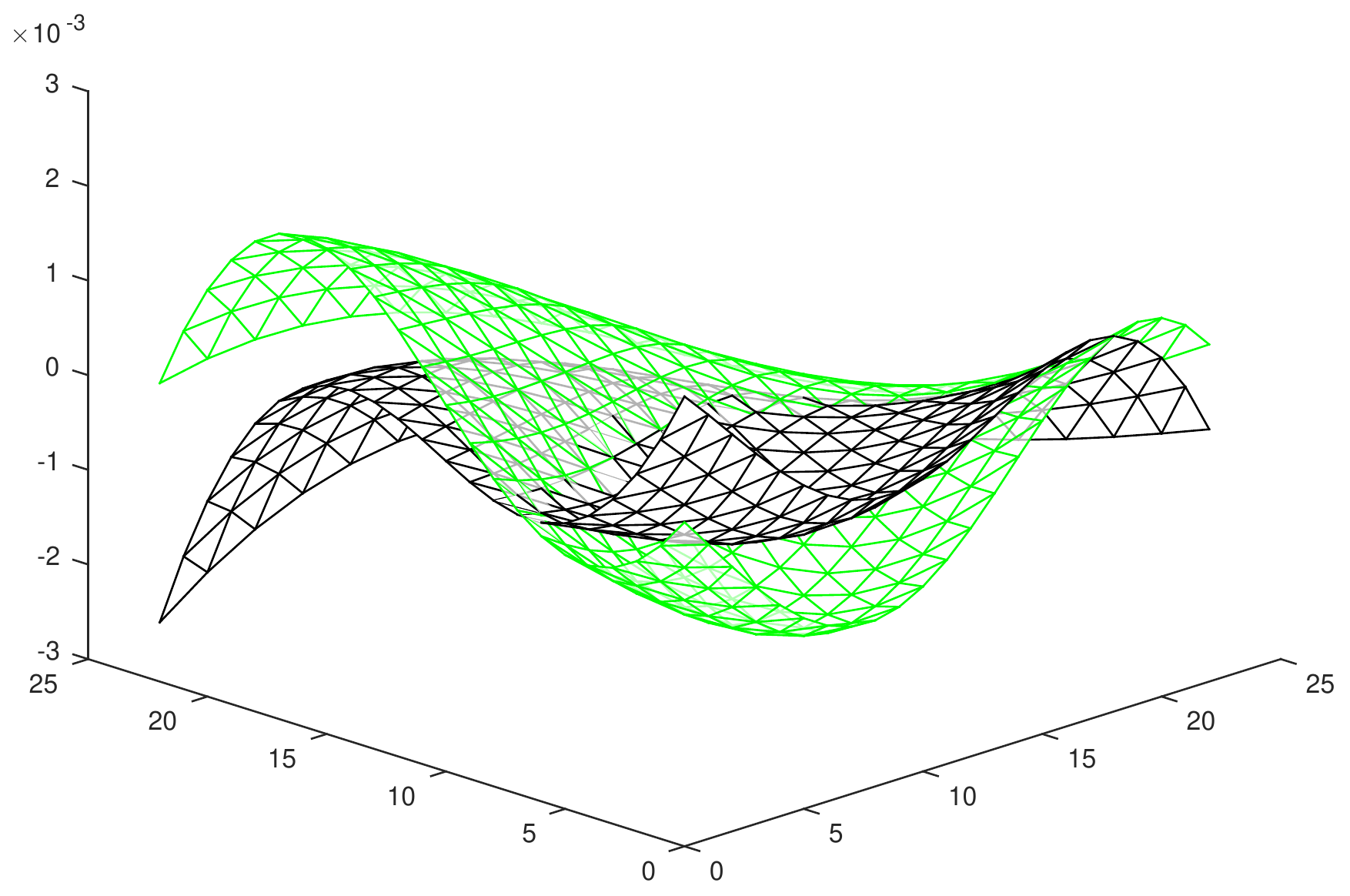} &
\includegraphics[width=0.475\textwidth]%
{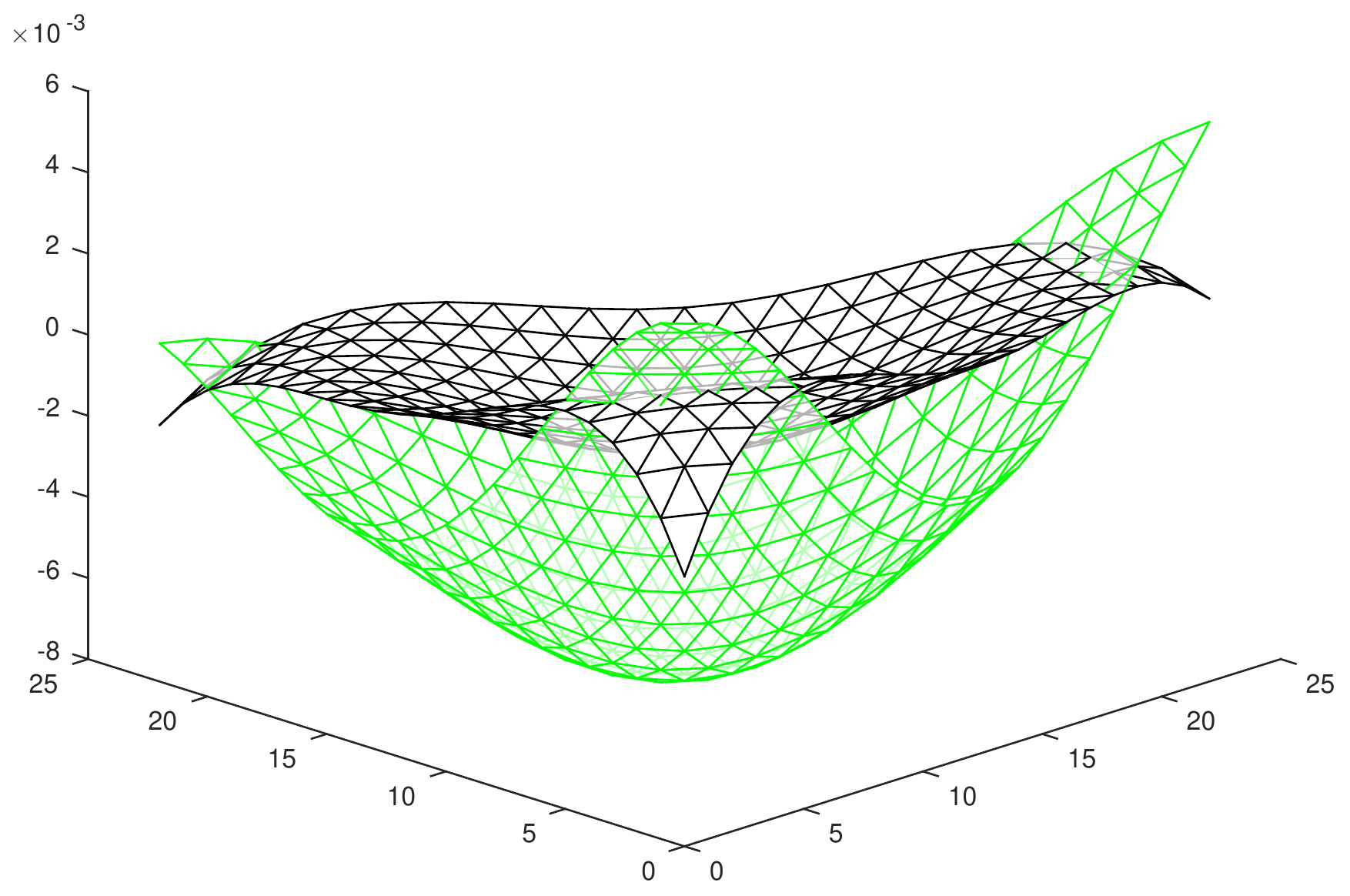} \\
(a) $w=tc$ & (b) $w=ts$  \\

\includegraphics[width=0.475\textwidth]%
{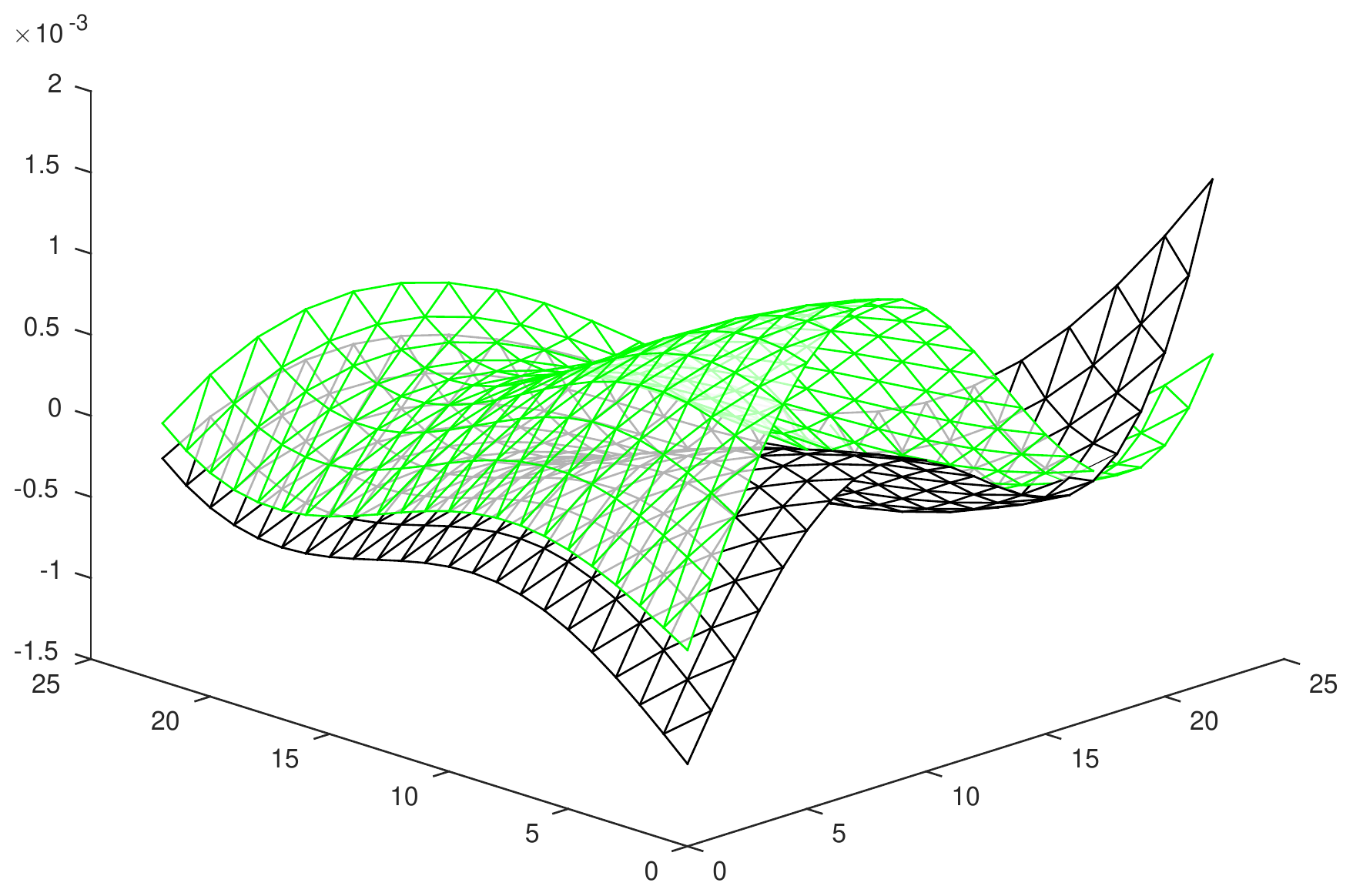} &
\includegraphics[width=0.475\textwidth]%
{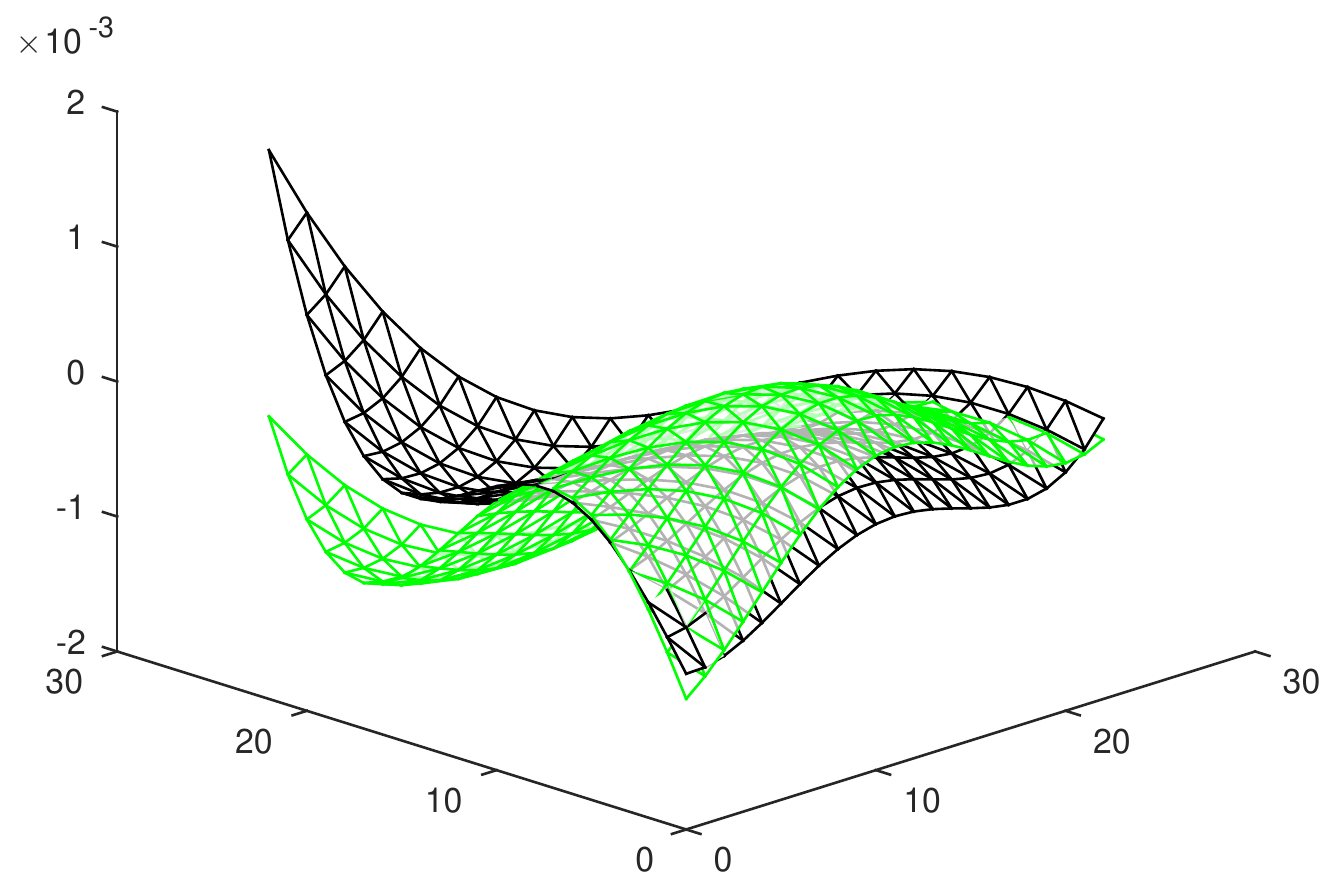} \\
(c) $w=mw$ & (d) $w=mn$ \\

\includegraphics[width=0.475\textwidth]%
{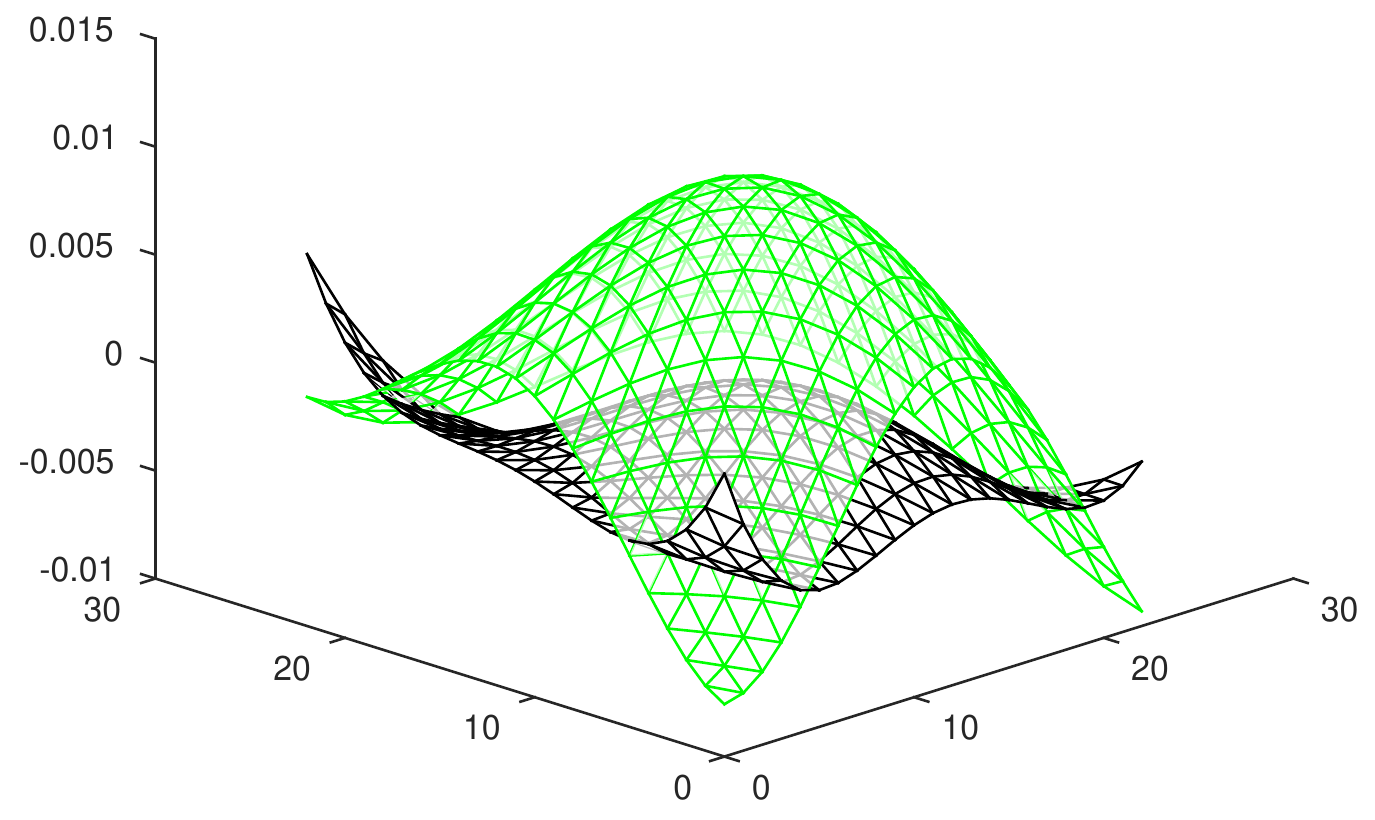} &
\includegraphics[width=0.475\textwidth]%
{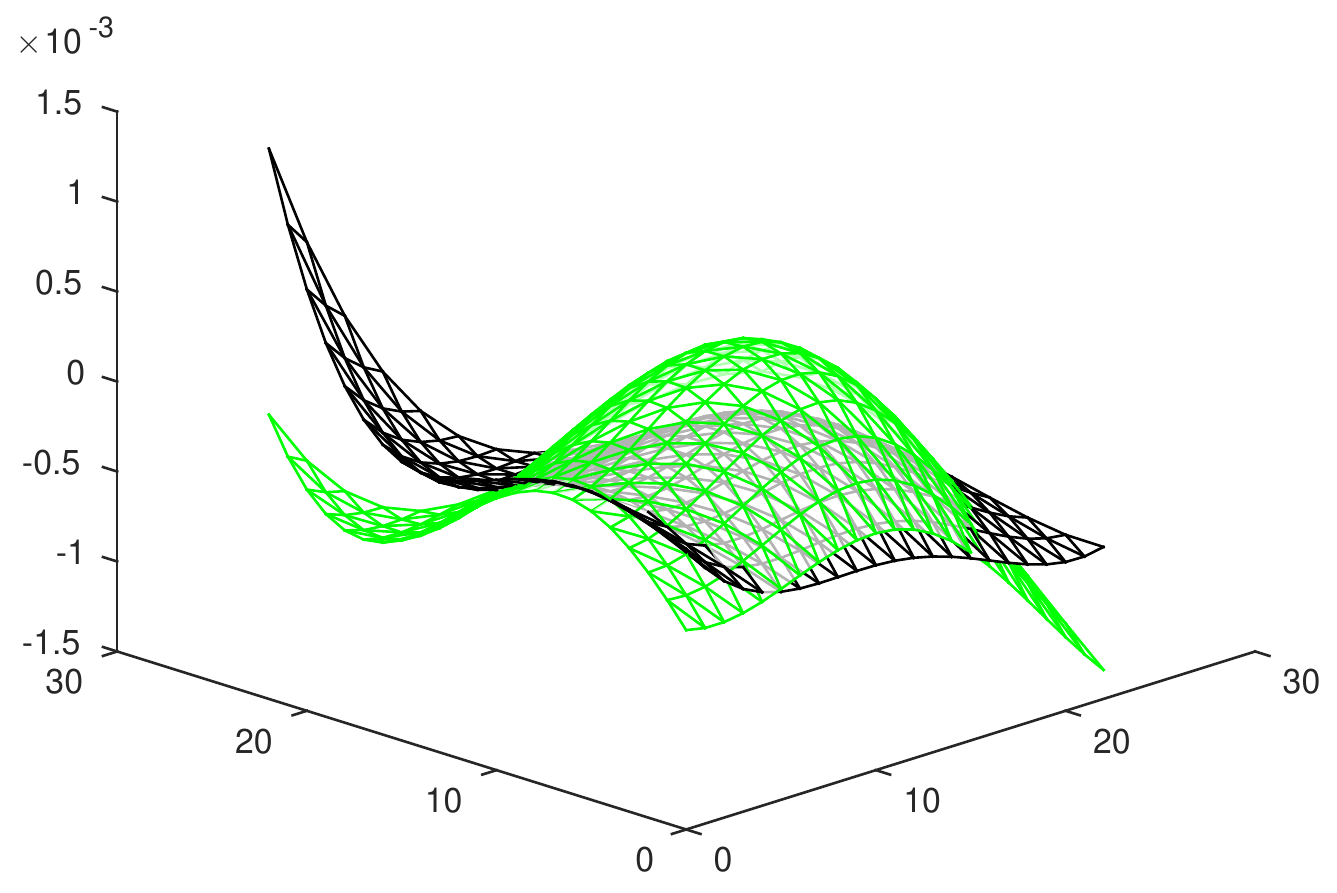} \\
 (e) $w=mc$  & (f) $w=mse$  \\
 
\includegraphics[width=0.475\textwidth]%
{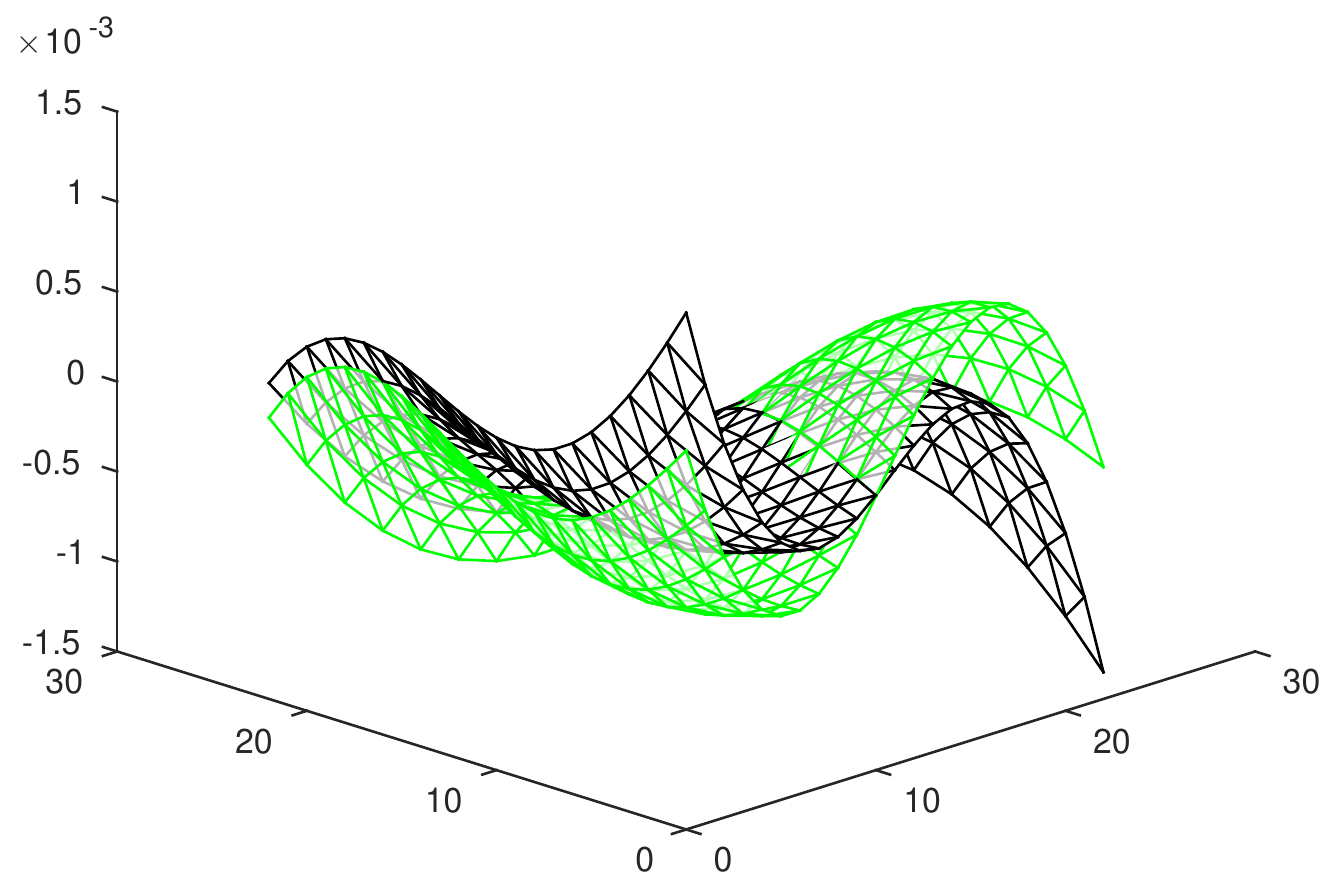} &
\includegraphics[width=0.475\textwidth]%
{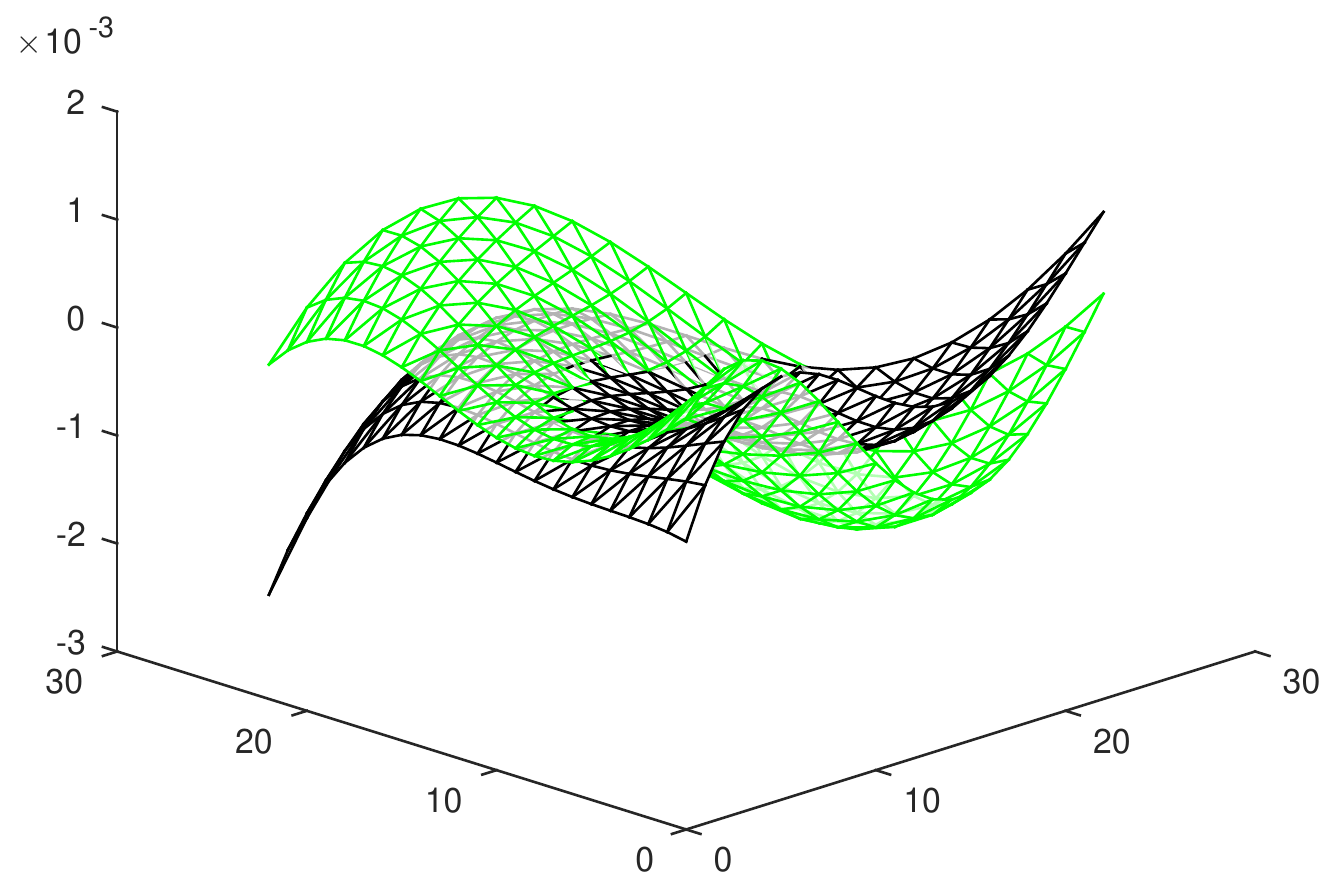} \\
 (g) $w=bnw$  & (h) $w=be$
\end{tabular}
\caption{Difference between \fem and \ip (green) or \ax (black)
   for the same setting than in Fig.~\ref{fig:stencilCCvsVC}.
\label{fig:stencilVCvsINTvsAPPROX}}
\end{figure*} 

In order to provide a more quantitative measure for the approximation accuracy,
we introduce two metrics, one for the overall fit and one for the maximal
deviation. For $w \in {\mathcal W}$ we set
\begin{align*}
e_w^2 &:= \sqrt{\sum_{(i,j,k) \in {\mathcal I}_\ell} \frac{\left[ s_w(i,j,k) - \P^\alpha {s}_w(i,j,k)\right]^2}{n_\ell}} \\
e_w^\infty &:= \max_{(i,j,k) \in {\mathcal I}_\ell}
\left|{s}_w(i,j,k) - \P^\alpha {s}_w(i,j,k)\right|
\,,
\end{align*}
where $\P^\alpha s_w$, $\alpha \in \{\ax, \ip\}$, denotes one of the proposed
surrogate polynomials associated with the cardinal direction $w \in
{\mathcal W}$. Obviously these two metrics correspond  to the discrete
$L^2$ and $L^\infty$ norms of the error in approximating the $w$-th component of
the stencil function $s$.

\begin{figure*}\centering
\begin{tabular}{@{}cc@{}}
\includegraphics[trim=110pt 270pt 110pt 265pt, clip=true, width=0.48\textwidth]%
 {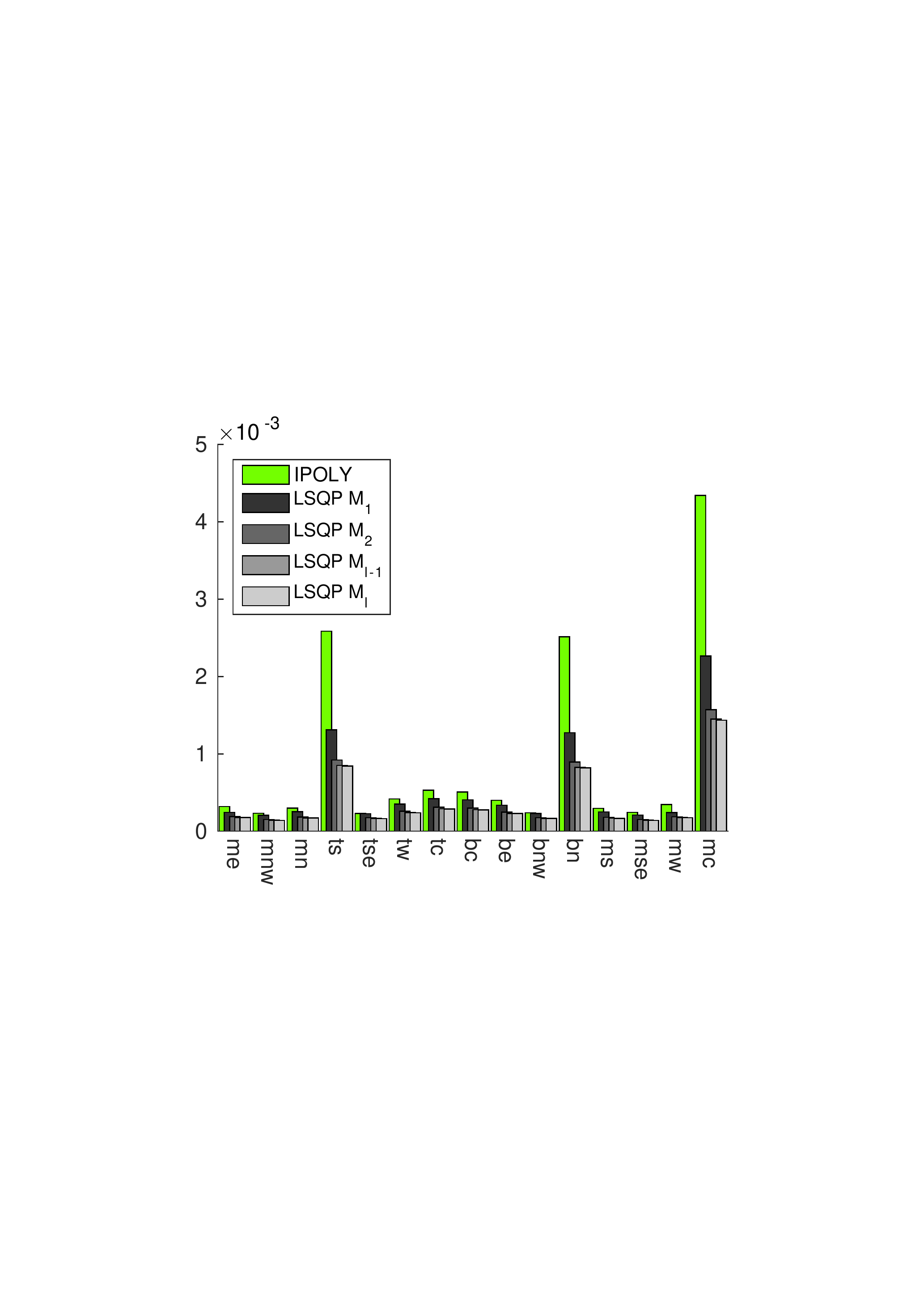} &
\includegraphics[trim=110pt 270pt 110pt 265pt, clip=true, width=0.48\textwidth]%
 {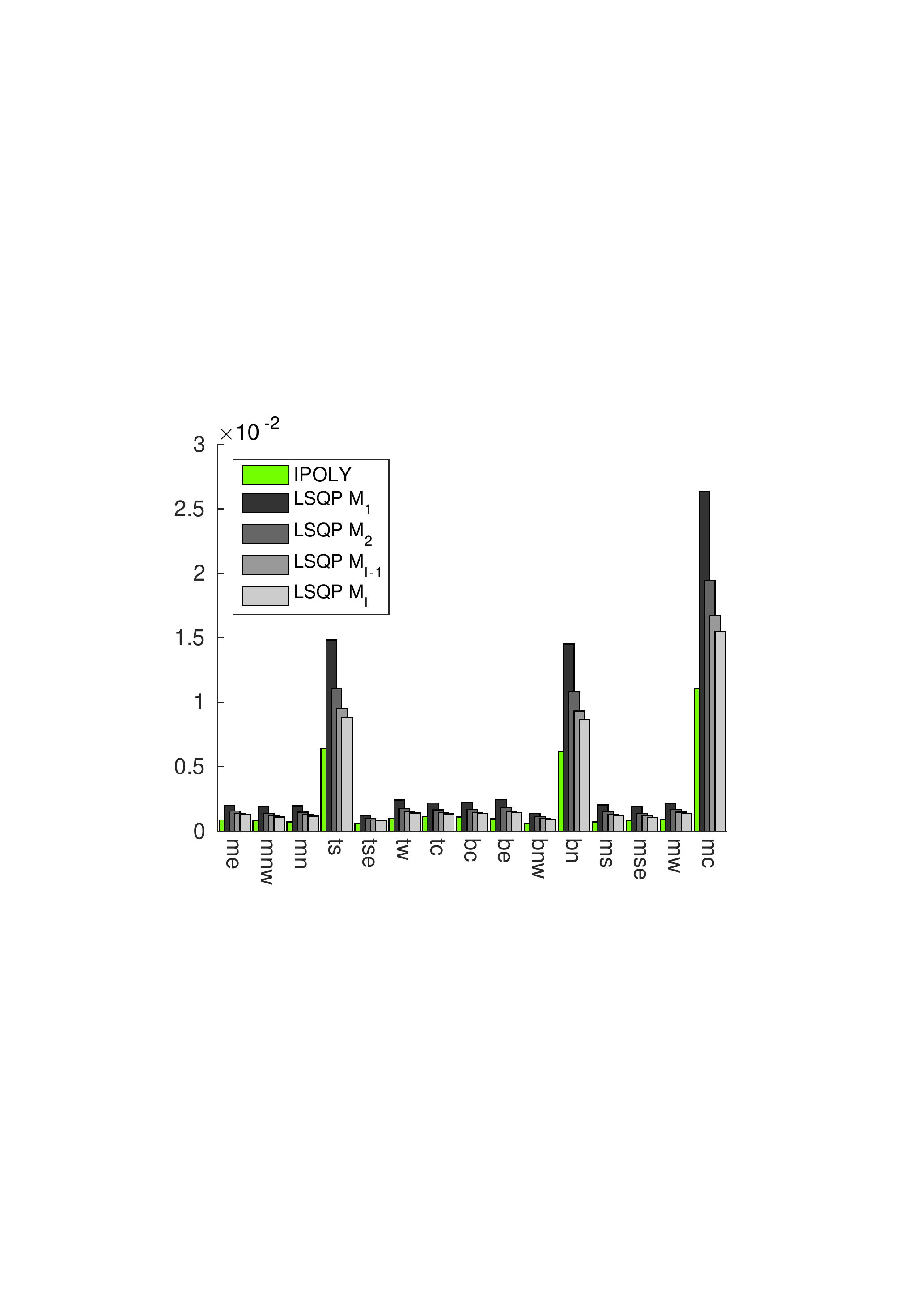}  \\
\end{tabular}
\caption{$e_w^2$ (left) and $e_w^\infty$ (right) for \ip and \ax with different
choices for the sampling set. Macro mesh consists of 60 elements and $L=4$.
\label{fig:bar_l2_lmax}}
\end{figure*} 

In Fig.~\ref{fig:bar_l2_lmax}, we illustrate the influence of the
choice of the
sampling set on \ax and compare it to the \ip approach.
We observe two peaks for the off-center values
and consequently also one in the center stencil entry.
The mapping function $\Phi$ depends on the location of the
macro element within the spherical shell and determines which
$w \in {\mathcal W}$ is more strongly affected by the approximation.
Comparing \ax and \ip we find that the maximal difference for \ax is higher
while the $L^2$ fit is better by construction. Already for the smallest sampling
set ${\mathcal M}_1$, i.e., $n_1 =35$, it is significantly better than for \ip. 
We note that ${\mathcal M}_2$, i.e., $n_2 = 455$, is, in general, a good compromise, in particular if the number of macro elements is already large.

\subsection{Definition of the surrogate operator}
We define our new two-scale finite element approximation as the solution of
\begin{equation} \label{eq:ts}
\L_\ell^{H,\alpha,q} \mathbf{u}_\ell^{H,\alpha,q} = \mathbf{f}_\ell.
\end{equation}
Here, the superscripts mark the dependencies on the macro element size $H$, 
the degree $q$ of the surrogate polynomial, and the selected approach
$\alpha\in\{\ip,\ax\}$.
For ease of notation, we replace the superscript triple by a tilde, i.e., we
set $\Lt_\ell:=\L_\ell^{H,\alpha,q}$ and similarly for $\mathbf{u}_\ell^{H,\alpha,q}$.

As is the case with $\L_\ell$, each row of $ \Lt_\ell $ is associated with a
node. In our approach, we replace for all nodes in a volume primitive
the exact finite element stencil on level $\ell$ by its surrogate polynomial
approximation, i.e., instead of computing  $s_w(i,j,k)$ on-the-fly, we evaluate
the surrogate ${\mathcal P}^\alpha s_w (i,j,k)$. As already mentioned,
the stencil entries associated
with nodes on lower dimensional primitives are assembled on-the-fly in standard
fashion.

\begin{remark}
We point out that, if $\Phi_T = \text{Id}$, $T \in {\mathcal T}_{-2}$,
then both our approaches reduces to the standard one. Also, they
can be applied selectively, and switched on or off macro element-wise
depending on the location of $T$ within the domain or its shape. Additionally
the choice of the polynomial degree can depend on $T$, and we can even use
anisotropic orders reflecting the properties of the mapping $\Phi_T$. 
\end{remark}

Since we have one stencil weight for each of the 15 couplings $w \in \W$, we
use 15 different second order polynomials, described by 15 different sets
of coefficients $\mathbf{a}_w$. The latter are computed in a setup phase.
This, firstly, requires for each macro element and each level $\ell\geqslant 1$
the evaluation of $m_q$ (\ip) or $|\mathcal{S}_{\ell}|$ (\ax) stencils and then
the solution of a linear or least-squares system.

Once the setup is completed, whenever the stencil for node $I$ is to be
applied, e.g.~as part of a smoothing step in a multigrid method, or a residual
computation, 15 polynomials of type \eqref{eq:quadraticPolynomial} must be
evaluated at this node to provide the surrogate stencil weights.
At first glance, this does not seem to be any faster than a standard
implementation. Recall, however, that within a typical matrix-free framework,
the on-the-fly evaluation of the stencil involves costly operations such as the
(re-)computation of local element matrices or the direct evaluation of weak
forms via quadrature rules. The advantage of our approach is
that the setup needs to be performed only once. The operator assembly itself
reduces to polynomial evaluations.
This also implies that the cost for the actual stencil application is the
same for both, \ip and \ax. Only in the setup phase is \ax more expensive
as more sampling points need to be evaluated and the minimization problem
includes a larger system matrix.

Compared to a full stencil pre-calculation and storage scheme also the amount
of required memory can be  neglected. We only need $15 m_q L$
coefficients per macro element to fully describe the approximated stencils.
Such a small number of coefficients can easily be stored even when $L$ is large.

\begin{remark}
Alternatively, one can think of using the same fine level surrogate polynomial to evaluate the stencils on the coarser mesh\-es. A rigorous study of this effect on the multigrid convergence rate is beyond the scope of the paper.
\end{remark}

We want to point out that in the application of the surrogate operator
we loop over the indices $(i,j,k)$ in a line-wise fashion. This implies
that we can always fix two of the three indices and, consequently, only
need to evaluate a 1D and not a 3D polynomial. This reduces the cost
significantly as the 1D polynomial has fewer terms.
Evaluation of the 1D polynomial itself can be accomplished very
efficiently by using a hierarchical recursion formula. More precisely 
starting from a single direct evaluation of $\hat p (i) := {\mathcal P}^\alpha s_w(i,j_0,k_0) $, we can evaluate the polynomial incrementally in terms of $q$ basic operations. 
In computer graphics this technique 
is known as forward differencing \cite{Rockwood1987,Rappoport1991}.

Apart from its cost, the accuracy of a numerical scheme is, of course,
of major importance. In this context we provide the following lemma related to
the consistency of the discretization expressed by $\Lt_\ell$.

\begin{lemma}
The row sum property of the stencil
 $$ \sum_{w\in\W}s_w^I =0
$$ is preserved by 
\ip and \ax.
\end{lemma}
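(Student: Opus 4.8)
The plan is to observe that both surrogate constructions are \emph{linear} maps in the underlying stencil data, so that a linear identity satisfied by the exact stencils at the sampling nodes is automatically inherited by their polynomial surrogates. Concretely, I would first verify the row-sum identity for the exact stencil function, then record the linearity of $\P^{\ip}$ and $\P^{\ax}$, and finally combine the two by superposition.

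For the exact stencils, fix an interior node $I \in \mathcal{M}_\ell$. Its stencil couples $I$ only to itself and its $14$ neighbours, so $\sum_{w\in\W}s_w^I = \sum_J a_\ell(\phi_I,\phi_J)$, where the last sum effectively ranges over all nodes since $a_\ell(\phi_I,\phi_J)=0$ whenever the supports do not overlap. Because the nodal $P_1$ basis is a partition of unity, the neighbouring hat functions sum to the constant $1$ on $\operatorname{supp}(\phi_I)$, and therefore
\[
\sum_{w\in\W}s_w^I
= a_\ell\Bigl(\phi_I,\textstyle\sum_J \phi_J\Bigr)
= \langle \nabla\phi_I,\nabla 1\rangle_{\Omega_\ell}
= 0,
\]
the gradient of a constant being zero. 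Since every volume-primitive node is interior, this holds at every $(i,j,k)\in\mathcal{I}_\ell$, and in particular at all \ip sampling points and all nodes of the \ax sampling set $\mathcal{S}_\ell$.

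Next I would make precise that, for each $\alpha$, the map $s_w\mapsto\P^\alpha s_w$ is one fixed linear operator applied component-wise: the same sampling set and the same matrix $A$ serve every direction $w\in\W$, only the data $\mathbf{b}$ changing with $w$. For \ip the coefficients solve $A\mathbf{a}=\mathbf{b}$ with a fixed Vandermonde matrix $A$ and right-hand side $\mathbf{b}$ holding the sampled values of $s_w$, so $\mathbf{a}=A^{-1}\mathbf{b}$ depends linearly on those samples (and trivially so in the Lagrange basis, where $A=\mathrm{Id}$). For \ax, the full-rank hypothesis recorded before the lemma (valid for $q\leqslant 4$) makes the minimiser the unique solution of the normal equations, $\mathbf{a}=(A\TP A)^{-1}A\TP\mathbf{b}$, again linear in $\mathbf{b}$. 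Since the polynomial \eqref{eq:quadraticPolynomial} depends linearly on $\mathbf{a}$ and the sampling $s_w\mapsto\mathbf{b}$ is itself linear, $\P^\alpha$ is linear and in particular sends vanishing data to the zero polynomial.

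Combining the two ingredients finishes the proof by superposition: by the first step the samples of $\sum_{w\in\W}s_w$ all vanish, hence
\[
\sum_{w\in\W}\P^\alpha s_w
= \P^\alpha\Bigl(\sum_{w\in\W}s_w\Bigr)
= \P^\alpha(0) = 0
\]
identically, for both $\alpha\in\{\ip,\ax\}$. I expect no serious obstacle here; the only points needing care are that the row-sum identity be available exactly at the sampling nodes --- immediate, since these lie in $\mathcal{M}_\ell$ and are interior --- and, for \ax, that full rank be invoked so that the least-squares map is genuinely single-valued and linear rather than merely affine or set-valued.
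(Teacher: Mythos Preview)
Your argument is correct and coincides with the paper's: both exploit that the map $\mathbf{b}\mapsto\mathbf{a}=(A\TP A)^{-1}A\TP\mathbf{b}$ (or $A^{-1}\mathbf{b}$ for \ip) is linear and that the sampled stencil data satisfy $\mathbf{b}_{mc}=-\sum_{w\neq mc}\mathbf{b}_w$, hence the coefficients and thus the surrogate values inherit the row sum. Your write-up is more detailed in that you also verify the row-sum identity for the exact stencil via the partition-of-unity argument, which the paper simply takes as given.
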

\begin{proof}
We recall that the coefficients of both our approaches can formally be
obtained by solving a system of normal equations.
 As $A$ has full rank its solution
is unique and 
{\newcommand{\ncc}{\substack{w\in\mathcal{W}\\w\neq mc}}
\begin{align*}
\mathbf{a}_{mc} &= \left(A\TP A\right)^{-1} A\TP \mathbf{b}_{mc} \\
&= - \left(A\TP A\right)^{-1} A\TP \sum_{\ncc}\mathbf{b}_w 
 = -\sum_{\ncc} \mathbf{a}_w
\,.
\end{align*}}%
Thus, the row sum condition holds for the coefficients of the surrogate
polynomials and, consequently, also for the stencil entries approximated by
an evaluation of the polynomials at each node position.
\end{proof}

Finally, we recall that the matrix  $\L_\ell$ is symmetric, as
$a_\ell({\phi}_I,{\phi}_J)$ = $a_\ell({\phi}_J,{\phi}_I)$. This property is not
preserved by our approach, due to the fact that the polynomials are evaluated
at the nodes.
Let $I$ be a node, and $J$ be the neighboring node which is reached, if we move
from $I$ in cardinal direction $w$. By $w^\text{o}$, we denote the opposite
cardinal direction, i.e., if we move from $J$ into the direction of
$w^\text{o} $ then we recover $I$. In our approach, we find
\begin{align*}
(\Lt_\ell )_{IJ} &= {\mathcal P}^\alpha s_w (i_I,j_I,k_I) \\
(\Lt_\ell )_{JI} &= {\mathcal P}^\alpha s_{w^\text{o}} (i_J,j_J,k_J) \, .
\end{align*}

To quantify this loss of symmetry, we measure it in the relative Frobenius norm.
We consider 
\begin{equation*}
\frac{\| (\Lt_\ell|_{T} )\TP - \Lt_\ell|_{T}\|_{_F}}{\| \L_\ell|_{T} \|_{_F}}
\,.
\end{equation*}

Here $\Lt_\ell|_{T}$ is the restriction of the global stiffness matrix to one
macro element. 
As expected, for both \ip and \ax we find 
that the relative non-symmetry is in $\mathcal{O}(h_\ell)$. 
\begin{remark}
A symmetric matrix can also be recovered. One possibility is to only define
seven cardinal directions, associated with the seven edge directions of a
macro element. For this we identify the cardinal directions $w$ and $w^\text{o}$
and evaluate the surrogate polynomial at the center of the edge connecting two
nodes. Note that in that case, we need to define the central stencil weight
via the row sum condition. As we will see in  Sec.~\ref{sec:kernel_opt}, a direct evaluation of the row sum is more expensive than the evaluation of a 
 surrogate polynomial of moderate order. Therefore we will not further consider this option. We also point out that the loss of symmetry can be considered as a small perturbation and does not prohibit using a conjugate gradient method as solver.
\end{remark}

\subsection{A priori estimate}
\label{subsec:apriori}

In this section, we will present an a priori estimate for the \ip and \ax discretization errors
in the $L^2$-norm.
Let $\mathbf{u}_\ell$ and $\mathbf{\tilde{u}}_\ell$ be the solution of 
\eqref{eq:algebraic} and \eqref{eq:ts}, respectively, 
and $u$ the exact solution of \eqref{eq:model_eq}. 

Since $\Lt_\ell$ can be interpreted as polynomial approximation of a
smooth function, we assume that it can be written as
\begin{equation*}
\Lt_\ell  = \L_\ell \left(\text{Id} + H^{q+1} \widetilde{B}_\ell \right)
\end{equation*}
with a suitable  $\widetilde{B}_\ell$ satisfying 
\begin{equation}
\label{eq:Bbounded}
\|\widetilde{B}_\ell\| \leqslant C_{q,\alpha} < \infty .
\end{equation}
Here $\|\cdot\|$ denotes the Euclidean  norm.
We point out that our assumption  is motivated by the  assumed smoothness of
$\Phi$ in combination with best  approximation and interpolation results.
Moreover by construction we preserve the row sum property, and thus the kernel
of $\L_\ell $ and $\Lt_\ell$ is locally the same. 
The notation $\lesssim$ is used for $\leqslant C$ with an $\ell,H$-independent
constant $C < \infty$.

Furthermore, we assume that $H$ is chosen small enough such that $H^{q+1}\|\widetilde{B}_\ell\| < 1$. 
By employing the properties of the Neumann series, we then obtain
\begin{align*}
\begin{split}
\Lt_\ell^{-1} &= \left( \text{Id} + H^{q+1}\widetilde{B}_\ell\right)^{-1}\L_\ell^{-1} \\ 
              &= \left[\sum_{k=0}^{\infty}\left(-H^{q+1}\widetilde{B}_\ell\right)^k\right] \L_\ell^{-1} \\               
              &= \left(\text{Id} - H^{q+1}\widetilde{B}_\ell\right)
              \L_\ell^{-1} + \mathcal{O}\left(H^{2(q+1)}\right) .
\end{split}
\end{align*}
Neglecting the higher order terms, we arrive at   
\begin{align*}
\begin{split}
\|\tilde{{u}}_\ell - {u}_\ell\|_0^2 &\lesssim h^3
\|\tilde{\mathbf{u}}_\ell - \mathbf{u}_\ell\|^2 \\
                                &= h^3 \| \L_\ell^{-1} \mathbf{f}_\ell - \Lt_\ell^{-1} \mathbf{f}_\ell\|^2 \\
                                &\lesssim h^3 \| H^{q+1} \widetilde{B}_\ell \L_\ell^{-1} \mathbf{f}_\ell \|^2 \\
                                &\lesssim h^3 H^{2(q+1)} \| \widetilde{B}_\ell \| ^2 \|  \mathbf{u}_\ell \|^2 \\
                                &\lesssim H^{2(q+1)} \| \widetilde{B}_\ell \|^2 \| u_\ell \|_0^2\:.
\end{split}
\end{align*}
It is well known that under suitable regularity assumptions $\|u -
u_\ell\|_0 \lesssim h_\ell^2 \| f \|_0$. Then the triangle inequality yields
\begin{align}
\begin{split}
\|u - \tilde{u}_\ell \|_0 &\leqslant \|u - u_\ell \|_0 + \|u_\ell - \tilde{u}_\ell \|_0  \\
&\lesssim \left(h_\ell^2 + H^{q+1}\right) \|f \|_0
\label{eq:apriori}
\\
                          &\lesssim h_\ell^2 \left( 1 + 4^{l+2} H^{q-1}\right) \|f \|_0\:. 
\end{split}
\end{align}
We note that the generic constants do possibly depend on $\alpha$ and
$q$. Keeping $H$ fixed and increasing $\ell$ 
will result in an asymptotically constant error not equal to
zero. However for  $\ell \leqslant L$ and $L$ fixed and  a small enough
macro meshsize $H$, we obtain  optimal order results in $h_\ell$, $q \geqslant 2$.

\section{Evaluation of accuracy and cost}
\label{sec:numstudy}

For our numerical studies, we employ model problem \eqref{eq:model_eq} with an
exact solution given by
\begin{equation*}
\bar{u} = (r-r_{1})(r-r_{2})\sin(10x)\sin(4y)\sin(7z)
\end{equation*}
where $r = r(x,y,z) = \left(x^2 + y^2 + z^2\right)^{1/2}$.
The right-hand side is set accordingly, and the homogeneous Dirichlet boundary
conditions are automatically satisfied by $\bar{u}$. An illustration is given
in the left plot of Fig.~\ref{fig:u_ex_err_proj_vs_cons}. The right plot shows 
the finite element error associated with the mesh sequence $\T$. It is
 obviously  dominated by the inaccurate boundary approximation of $\Omega_H$ and is of second order in $H$.

\begin{figure*}\centering
\begin{tabular}{ccc}
\includegraphics[trim=160pt 50pt 20pt 50pt, clip=true, width=0.3\textwidth]{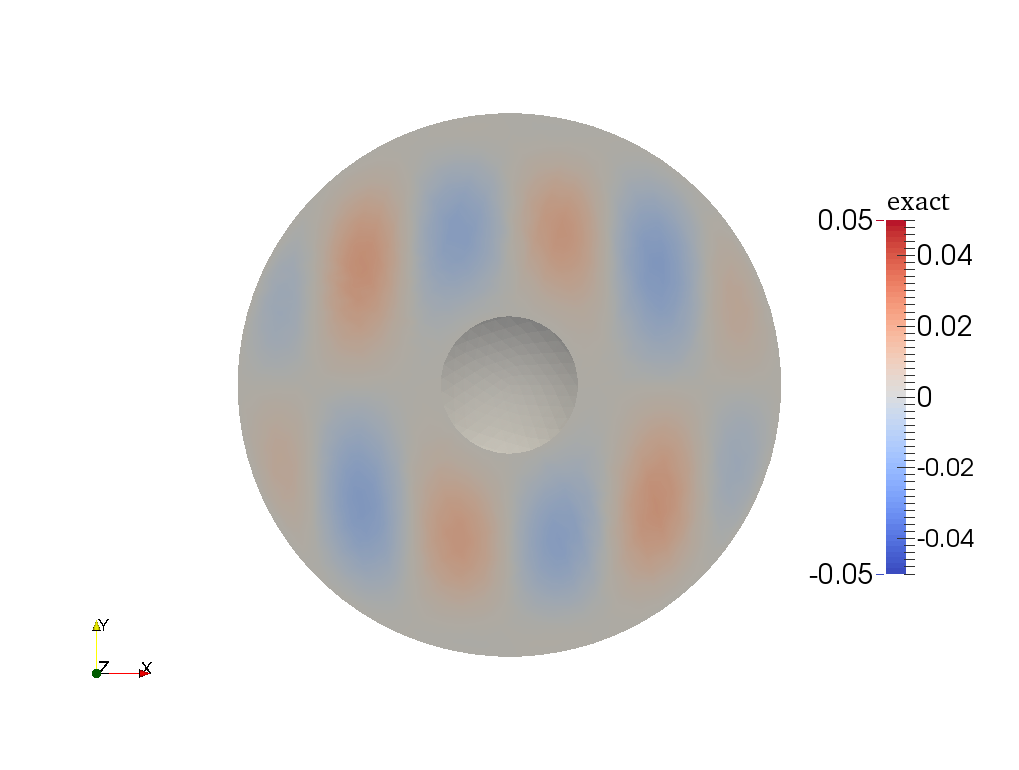} &
\includegraphics[trim=160pt 50pt 20pt 50pt, clip=true, width=0.3\textwidth]{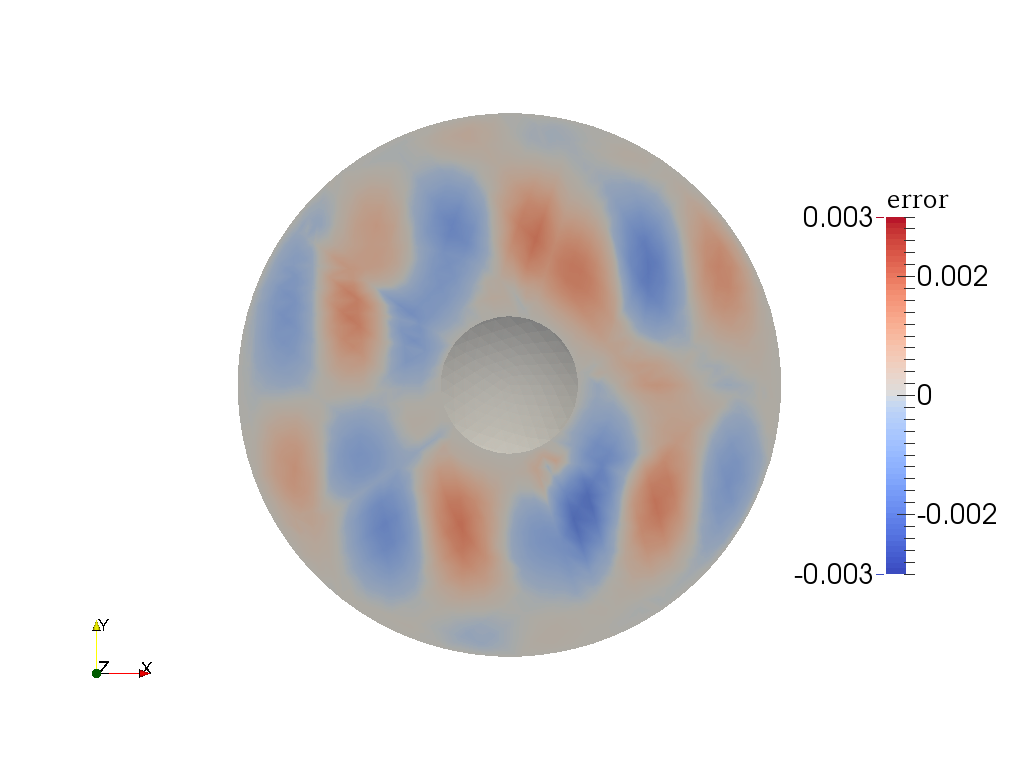} &
\includegraphics[trim=160pt 50pt 20pt 50pt, clip=true, width=0.3\textwidth]{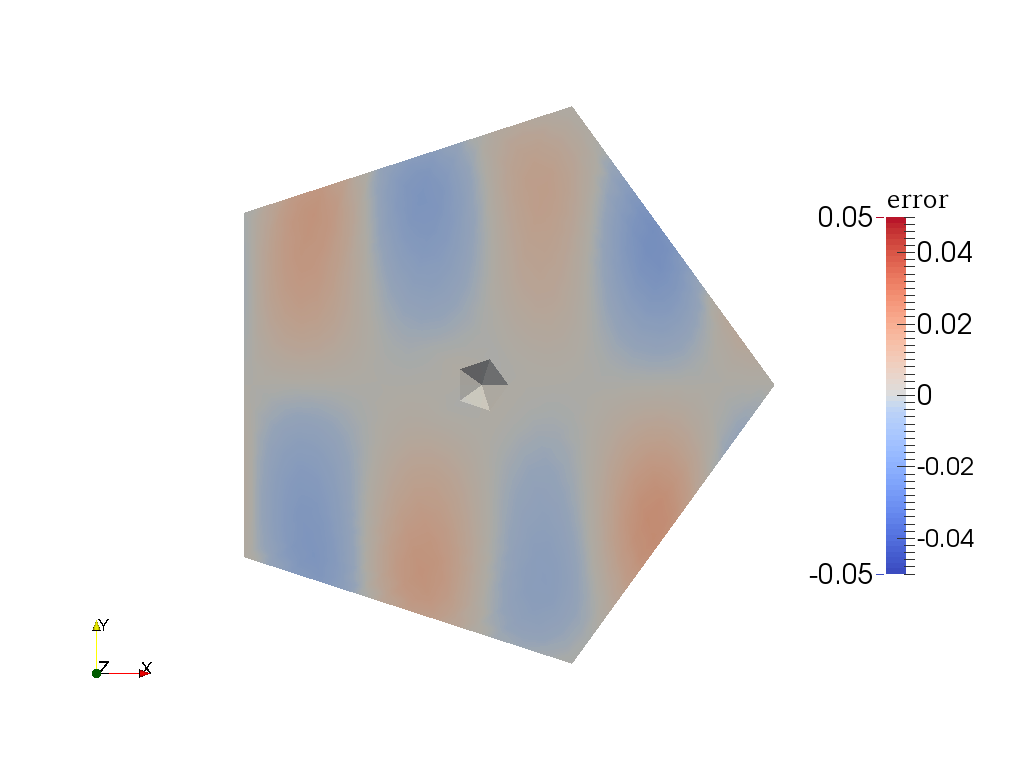} \\
\end{tabular}
\caption{Cut parallel to $x,y$-plane at $z=-0.5$. 
Exact solution $\bar{u}$ (left), error $\bar{u} - u_\fem$ (middle) and $\bar{u} - u_{\text{cons}}$ (right).
On the boundary the error of the constant solution is equal to the exact $\bar{u}$ as we impose zero Dirichlet 
boundary conditions. For $u_\fem$ the error is one order of magnitude lower.
The slight asymmetry visible results from the unstructured macro grid.
\label{fig:u_ex_err_proj_vs_cons}}
\end{figure*}

To solve the discrete problem, we employ a geometric multigrid solver. It
uses a V(3,3)-cycle with a lexicographic hybrid Gauss-Seidel (GS) smoother,
which also serves as coarse grid solver. The smoother is hybrid in the sense
that, due to the HHG communication patterns between primitives, some
dependencies are neglected and low-order primitives can have some points that
are only updated in a Jacobi-like fashion, see \cite{Bergen2004} for details.
However, the majority of points is updated as in a standard GS scheme.
Experiments with different numbers of smoothing steps have shown qualitatively the same results.
The iteration is started with a random initial guess. 

A standard geometric multigrid solver contains two operations which involve the 
application of the stencils. These are \emph{smoothing} and
\emph{residual computation}. Instead of applying either the stencil for
$\L_\ell$ or $\Lt_\ell$ in both operations, it is also possible to apply a mixed
formulation. This is known as \emph{double discretization} (DD), see
\cite{Brandt2011}. We include this technique in our tests. For this we apply
the less expensive surrogate operator $\Lt_\ell$ for the relaxation sweeps,
as those require less accuracy, and the exact one $\L_\ell$ for residual
computation.
Using two different operators leads to two competing components in the multigrid
method, as the smoother and the coarse grid correction aim to drive the
iterate towards the algebraic solution of their respective operators. Hence in
the double discretization approach, the algebraic convergence will typically suffer,
and  residual-based stopping criteria are inappropriate. Instead one should,
e.g., check the sequence of updates for convergence to zero \cite{Brandt2011}.

\subsection{Accuracy of IPOLY and LSQP}
\label{subsec:accuracy}

For our numerical study, we will compare the standard Galerkin formulation with
projected coordinates, termed \fem, against the \ip and \ax approaches.
For the latter, we will also test the aforementioned double discretization
technique. The resulting iterative schemes are denoted as \ip DD and \ax DD.
Besides this we will show finite element results for an unprojected approach
(\cons), i.e.~with a fixed $\Omega_H$. In order to further stress out the quality of the double
discretization approach, we test it using this unprojected operator for
smoothing, giving us CONS DD.
If not explicit\-ly mentioned otherwise, all \ip and \ax based results use
quadratic polynomials.

We start our tests with an initial mesh $\mathcal{T}_{-2}$ consisting of
60 macro elements and set $L\in\{1,\ldots,6\}$. To measure the
discretization error, we introduce  the discrete $L^2$-norm
\begin{equation*}
e:=h_\ell^{\nicefrac{3}{2}} \|I_\ell(\bar{u}) - \mathbf{v}_\ell\|_2 \,,
\end{equation*}
where $I_\ell$ denotes the nodal interpolation operator, and $\mathbf{v}_\ell$
represents either the actual multigrid approximation or
the final iterate of the employed scheme. This error is equivalent
to the $L^2$-error of the associated finite element functions. In order to be
sure to have reached the asymptotic regime, we perform  10 multigrid
cycles. Results for $L=5$ are given in Fig.~\ref{fig:res_discerr_level72}.

\begin{figure}[!h]\centering
\includegraphics[width=0.48\textwidth]{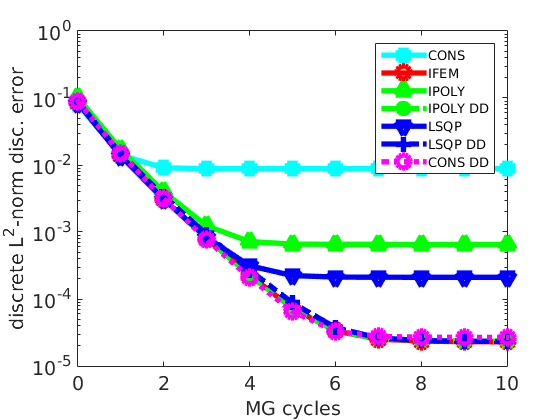} 
\caption{Development of the error in the iterates for an input mesh of 60
macro tetrahedrons and refinement level $L=5$.\label{fig:res_discerr_level72}}
\end{figure} 

\begin{figure}\centering
\includegraphics[width=0.48\textwidth]{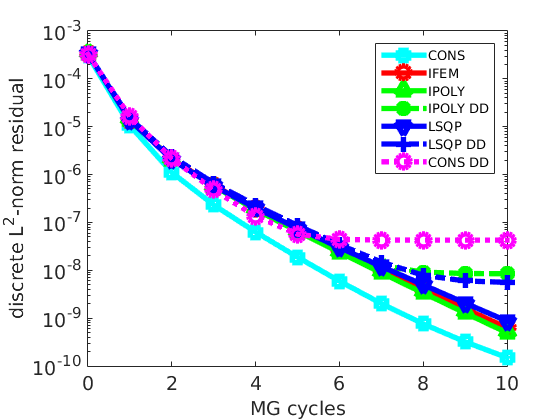} 
\caption{Discrete $L^2$ norm of the residual at $L=5$ for a
input mesh with 60 macro tetrahedrons. 
\label{fig:res_discerr_level71}}
\end{figure} 

One can see that the discretization error for CONS clearly suffers from the
insufficient resolution of the spherical geometry. The errors for \ip and \ax
are roughly a factor of 10 and 50 smaller, respectively. We observe that
\ax performs better than \ip which can be attributed to a smaller constant in
\eqref{eq:Bbounded}. 
This supports the observation from Sec.~\ref{subsec:approximation} that the
stencils are uniformly better fitted.
Note that, as was to be expected for a rather
coarse input mesh (large $H$) and multiple refinements (large $L$), the error
is dominated by the approximation error in the stencil $s$ introduced by
replacing its values by those of the surrogate polynomials.
Using the double discretization scheme, however, allows to (almost) recover the
discretization error of the \fem approach. Here, both \ip DD and \ax DD perform
equally well and even CONS DD achieves very good results.

In Fig.~\ref{fig:res_discerr_level71}, we provide the discrete $L^2$-norms of
the residuals. We observe that the convergence for the DD variants 
breaks down after a few iterations. As already indicated, this was to be
expected due to the inconsistency in smoothing and residual computation.
Whereas the residuals of all non DD-scenarios show similar
asymptotic convergence rates.

In Fig.~\ref{fig:discerr_alllevels}, we show the discrete $L^2$-error for increasing refinement levels. As  expected, \fem shows quadratic order convergence.

This is also satisfied by the DD approaches. But, as already mentioned before, this does not hold for \ip and \ax.
Nevertheless, we observe in the pre-asymptotic regime a second order error decay
 and only asymptotically, the error is
dominated by the term in $H^{q+1}$, see also \eqref{eq:apriori}. For \ax we observe second order for 
$L \leqslant 3$, while for \ip this is only the case for $L \leqslant 2$.

\begin{figure}\centering
\includegraphics[width=0.48\textwidth]{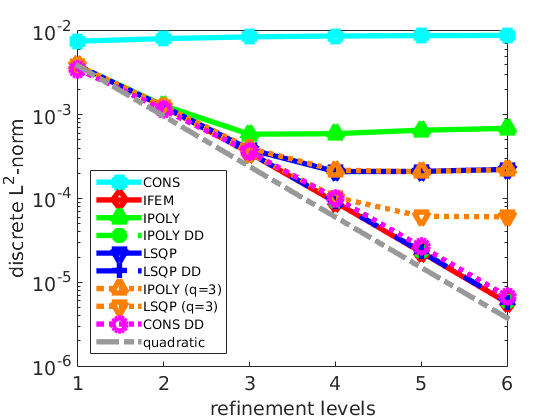}
\caption{Discretization error in discrete $L^2$ norm for different levels of refinement. 
\label{fig:discerr_alllevels}}
\end{figure}

While DD shows excellent convergence,  it also requires the
application of the expensive operator $\L_\ell$. 
Although it is used only every seventh operator call, in case of a V(3,3) cycle, 
it still dominates the time to solution. This can be seen in the scaling results in Sec.~\ref{sec:scaling}.

Motivated by the a priori estimates from Sec.~\ref{subsec:apriori},
we further investigate the dependence of \ax and \ip on the refinement
level $L$, the macro mesh-size $H$ and the polynomial degree $q$. 
The above test setting describes the coarsest  macro mesh with 60 elements. 
Further refinement in radial and tangential direction results in input meshes
$\mathcal{T}_{-2}$ with 480, 3,840 and 30,720 macro elements, respectively.
The discretization error for \fem, \ax and \ip after 10 multigrid cycles is
displayed in Tab.~\ref{tab:hHstudy} for different numbers of macro elements and
different polynomial degrees, $q=1,2,3$. These results are for $j=2$ in $m(\ell,j)$,
i.e.~for $S_\ell = M_2$. We note that using a computationally more expensive
choice of $S_\ell$ did not yield any significant differences.

As expected, we see in Tab.~\ref{tab:hHstudy} a qua\-dra\-tic convergence for
\fem. For the other variants we observe that for a finer input mesh and for a
higher polynomial degree, the accuracy obtained increases. Combinations for
which $e_{\ax}$ or $e_{\ip}$  differ relatively by more than 10 percent from $e_{\fem}$ are highlighted
by italic table entries.

We find that for \ip and $q=1$ the maximal feasible refinement level $L$
is independent of $H$. 
While for $q=2$ two refinements in $H$ allow one more refinement in $L$. And
for $q=3$ those values form a diagonal, meaning that for each 
finer $H$ also $L$ can be increased by one.
For \ax the results look even better.
As a rule of thumb we observe that for \ax the italic entries are
shifted by at least one level down compared to \ip.

\begin{table}[t!]
\caption{Discretization error for \fem and \ip/\ax (linear, quadratic and cubic) for different combinations
   of $h_\ell$ and $H$. Italic entries have a relative deviation from the reference $e_{\fem}$ of more than 10 percent.
 \label{tab:hHstudy}}
\centering
\begin{tabular}{c|*{4}{c}}
\hline
\tiny{\#mac el}& 60 & 480 & 3840 & 30720 \\
\hline
      & \multicolumn{4}{ c }{$e_{\fem}$}   \\     
\hline
$L=2$ & 1.2e-03 & 3.5e-04 & 8.9e-05 & 2.2e-05   \\
$L=3$ & 3.4e-04 & 9.4e-05 & 2.2e-05 & 5.6e-06   \\
$L=4$ & 9.0e-05 & 2.3e-05 & 5.7e-06 & 1.4e-06   \\
$L=5$ & 2.2e-05 & 5.9e-06 & 1.4e-06 & 3.5e-07  \\
$L=6$ & 5.7e-06 & 1.5e-06 & 3.5e-07 & 8.9e-08  \\

\hline
      & \multicolumn{4}{ c }{$e_{\ip}$ ($q=1$)}   \\     
\hline
$L=2$ & \textit{1.5e-03} & \textit{4.1e-04} & 8.9e-05 & 2.2e-05 \\
$L=3$ & \textit{2.1e-03} & \textit{5.6e-04} & \textit{1.1e-04} & \textit{2.8e-05} \\
$L=4$ & \textit{2.7e-03} & \textit{7.3e-04} & \textit{1.5e-04} & \textit{3.8e-05} \\
$L=5$ & \textit{3.1e-03} & \textit{8.2e-04} & \textit{1.8e-04} & \textit{4.4e-05} \\
$L=6$ & \textit{3.3e-03} & \textit{8.7e-04} & \textit{1.9e-04} & \textit{4.7e-05}       \\
\hline
      & \multicolumn{4}{ c }{$e_{\ip}$ ($q=2$)}   \\     
\hline
$L=2$ & 1.3e-03 & 3.6e-04 & 9.0e-05 & 2.3e-05 \\
$L=3$ & \textit{5.9e-04} & 1.0e-04 & 2.3e-05 & 5.7e-06 \\
$L=4$ & \textit{5.9e-04} & \textit{6.3e-05} & \textit{6.5e-06} & 1.5e-06 \\
$L=5$ & \textit{6.5e-04} & \textit{6.7e-05} & \textit{3.4e-06} & \textit{4.0e-07} \\
$L=6$ & \textit{6.9e-04} & \textit{7.1e-05} & \textit{3.1e-06} & \textit{1.7e-07} \\
\hline
      & \multicolumn{4}{ c }{$e_{\ip}$ ($q=3$)}   \\     
\hline
$L=2$ & 1.3e-03 & 3.6e-04 & 9.0e-05 & 2.3e-05 \\
$L=3$ & \textit{4.0e-04} & 9.6e-05 & 2.3e-05 & 5.7e-06 \\
$L=4$ & \textit{2.2e-04} & \textit{2.7e-05} & 5.9e-06 & 1.4e-06 \\
$L=5$ & \textit{2.1e-04} & \textit{1.2e-05} & \textit{1.6e-06} & 3.7e-07 \\
$L=6$ & \textit{2.2e-04} & \textit{9.9e-06} & \textit{6.4e-07} & \textit{1.0e-07} \\
\hline
      & \multicolumn{4}{ c }{$e_{\ax}$ ($q=1$)}   \\     
\hline
$L=2$ & 1.3e-03 & 3.7e-04 & 9.0e-05 & 2.3e-05 \\
$L=3$ & \textit{7.7e-04} & \textit{1.4e-04} & \textit{2.7e-05} & \textit{6.4e-06} \\
$L=4$ & \textit{8.0e-04} & \textit{1.1e-04} & \textit{1.5e-05} & \textit{3.1e-06} \\
$L=5$ & \textit{8.5e-04} & \textit{1.2e-04} & \textit{1.4e-05} & \textit{2.8e-06} \\
$L=6$ & \textit{8.8e-04} & \textit{1.2e-04} & \textit{1.5e-05} & \textit{2.9e-06} \\
\hline
      & \multicolumn{4}{ c }{$e_{\ax}$ ($q=2$)}   \\     
\hline
$L=2$ & 1.3e-03 & 3.6e-04 & 8.9e-05 & 2.3e-05 \\
$L=3$ & \textit{3.9e-04} & 9.4e-05 & 2.3e-05 & 5.7e-06 \\
$L=4$ & \textit{1.9e-04} & 2.5e-05 & 5.6e-06 & 1.4e-06 \\
$L=5$ & \textit{1.8e-04} & \textit{1.3e-05} & 1.4e-06 & 3.5e-07 \\
$L=6$ & \textit{1.9e-04} & \textit{1.3e-05} & 3.8e-07 & 8.2e-08 \\
\hline
      & \multicolumn{4}{ c }{$e_{\ax}$ ($q=3$)}   \\     
\hline
$L=2$ & 1.3e-03 & 3.6e-04 & 8.9e-05 & 2.3e-05 \\
$L=3$ & 3.5e-04 & 9.4e-05 & 2.3e-05 & 5.7e-06 \\
$L=4$ & \textit{1.0e-04} & 2.4e-05 & 5.7e-06 & 1.4e-06 \\
$L=5$ & \textit{6.9e-05} & 5.8e-06 & 1.4e-06 & 3.5e-07 \\
$L=6$ & \textit{7.3e-05} & \textit{1.9e-06} & 3.3e-07 & 8.7e-08 \\
\hline
\end{tabular}
\end{table}

These observations confirm our theory that the discretization error is a
combination of two parts: one fine scale part $h_\ell^2 $ and one depending on
the coarse meshsize $H$, but being of higher order depending on the
selected polynomial degree.

Real geophysical applications will typically be executed on HPC systems with
${\mathcal O} (10^5)$ cores. Each core will then manage at least one, but often also more than
one macro element, imposing a small upper  bound on $H$ and thus a
reasonable large bound on  $L$.
Hence, in these application
we will be automatically in an $(L,H)$-parameter regime that does not exceed the critical
$L$. Consequently our two-scale approach is perfectly suited for this kind
of applications.

\subsection{Cost considerations}
%
Besides the quality of the approximation, we also have to consider its cost. We
split our considerations into two parts, the cost for applying and the cost
for setting up the surrogate operator. While our two approaches \ip and \ax have different setup costs,
the cost for the evaluation of the surrogate  stencil operator is the same in both surrogate approaches. Moreover once the stencil weights are evaluated, the application of the actual stencil  is
independent of the chosen approach. 
\subsubsection{Cost for operator evaluation}\label{subsubsec:operatorEval}
%
To give a quantitative idea Tab.~\ref{tab:flops} provides the number of
FLOPs for a single Gauss-Seidel relaxation step, which involves the equivalent
of one stencil application, see \eqref{eq:iteration-scheme} below.
We base our considerations, w.r.t.~polynomial evaluation, on the incremental
approach.
Note that the traditional FLOPs measure can only give an impression on the
arithmetic
load of the different variants. The actual execution times depend on various
implementation details, see also 
Sec.~\ref{sec:kernel_opt} where we discuss how to optimize the polynomial
evaluation with respect to run-time. 

For the constant case, we get in Tab.~\ref{tab:flops} the minimal value of
29 FLOPs, as the same pre-computed stencil can be used at all nodes. As
mentioned above this cost for the stencil application remains the same in the
other approaches. However, the extra cost for computing the proper finite
element or surrogate stencil entries occur.
As a small detail note that in the constant case HHG employs a multiplication
with the reciprocal of the central stencil weight in
\eqref{eq:iteration-scheme} which gets pre-computed in the setup phase.
For the other approaches we need to perform a floating point division,
which is more costly.

Setting up the stencil in the \fem approach with projected coordinates involves
firstly computing for the current node the element matrices of its surrounding
24 elements. For this we use a C++ code that was auto-generated and optimized
using the \mbox{FEniCS} Form Compiler (FFC), see \cite{LogOlgRogWel:2012},
which results in 53 FLOPs per element matrix. 
Stencil weights are obtained by summing
up the associated entries in the element matrices. In our 15-point-stencil, we
have 6 weights belonging to edges attached to 4 neighboring elements and
8 weights belonging to edges attached to 6 neighboring elements, see
Fig.~\ref{fig:uniform}. Hence, computation of the non-central stencil weights
adds another 58 FLOPs. The central weight can either be assembled in the same
fashion (23 FLOPs) or via the row sum property (13 FLOPs). Altogether this
leads to the cost of 1353 resp.~1343 FLOPs given in Tab.~\ref{tab:flops}.
Thus, this approach is nearly a factor of $50$ more costly than the constant
one. Though, to be fair, we remark that our implementation does 
not take into
account the fact that each element matrix is shared by four nodes. However,
exploiting this is not straightforward as the point-smoother we are considering
here requires an assembled central stencil weight and re-using element matrices
for different nodes introduces additional overhead to manage the related
dependency information.

\begin{table}
\caption{Number of FLOPs for a single stencil application in a Gauss-Seidel
relaxation step for a scalar operator in HHG. For fair comparison DD cost
is averaged over cheap smoothing and expensive residual
computation.\label{tab:flops}}
\begin{tabular}{l|l}
   \hline
 Constant              & \makebox[13mm][r]{  29} \\
 \fem (direct)         & \makebox[13mm][r]{1353} \\
 \fem (row sum)        & \makebox[13mm][r]{1343} \\
 \ip/\ax (na\"{i}ve, $q=2$)   & \makebox[13mm][r]{ 378} \\
 \ip/\ax (increment)   & \makebox[13mm][r]{29+15q} \\
 CONS DD  &  \makebox[13mm][r]{$\frac{1343 + 29\nu}{\nu+1}$}\\
   \hline 
\end{tabular}
\end{table}

We now turn to our proposed surrogate stencils. Here we need to add on top
of the cost for applying the stencil the evaluation of the polynomials to
obtain its weights. If we employ the na\"{i}ve approach, i.e., a
straightforward evaluation with respect to its monomial basis representation,
we end up with 378 FLOPs for a tri-variate quadratic polynomial.
Here, we assumed the sum rule is employed for the
central weight. This already gains a factor of 3.5 compared to
\fem, but still requires over ten times more FLOPs than in the constant case.
Now, for the incremental approach, asymptotically, only two operations are
needed for the evaluation of each polynomial. Thus,
the total number of FLOPs decreases to 59, which is only about twice  the
optimal value of the constant setting. Note that in this case, evaluation of
the central entry via its polynomial is, of course, cheaper than the sum rule.
Using higher order polynomials of degree $q$, we asymptotically need $q$
operations and an additional cost factor  of 
\begin{equation} \label{eq:costf}
C(q) = 1 + \frac{15 q}{29} \approx 1 + \frac{q}{2}.
\end{equation}
So the cubic \ip / \ax approach, overall,
is a factor of 1.25 more expensive than the quadratic one.

To quantify the cost of a CONST double discreti\-zation approach in comparison with a pure \ip / \ax approach ($q=2$), we take the
average over one residual computation and a total of $\nu$ pre- and
post-smoothing steps. For a typical choice of $\nu$ ranging between 2 and 8 the resulting cost factor  is
roughly  between 8 and 3.
This demonstrates that DD is an option that should seriously be considered, but
overall is still dominated by the expensive residual computation.

In order to reach the best performance it is better to either decrease $H$ or
to increase the polynomial degree. For smaller runs on workstations or on
mid-sized clusters the latter may be the preferred choice in order to keep
the number of macro elements as low as possible.
Whereas for large scale simulation on high performance computers, quadratic
polynomials seem most suitable. Here, $H$ is decreased almost naturally as
the number of cores that are to be used is coupled to the number of macro
elements.

\subsubsection{Cost for setup}
%

\begin{table*}[ht]
\caption{Number of sampling points $|S_L|$ for a single macro
element on finest level $(L=5)$, sum of sampling points over all
levels, time $t_{\text{sample}}$ for stencil computation at sampling
points, $t_{\text{linalg}}$ for computation of polynomial coefficients,
$t_{\text{setup}}$ for total setup, $t_{\text{cycle}}$ for a single
V(3,3)-cycle, $p_{\text{cycle}}$ percentage of $t_{\text{setup}}$
w.r.t.~$t_{\text{cycle}}$; for \fem $p_{\text{cycle}}$ relates an \fem to a \ax cycle;
for \ax values depend on strategy
$m(\ell,j)$ for choosing $S_\ell$; times are given in seconds.
\label{tab:setup}}
\begin{tabular}{@{}lc|rrrrrrr}
\hline
\rule[-1.5mm]{0mm}{6mm}%
      & $j$
      & \multicolumn{1}{c}{$|S_L|$}
      & $\sum_{\ell=1}^L |S_\ell|$
      & \multicolumn{1}{c}{$t_{\text{sample}}$}
      & \multicolumn{1}{c}{$t_{\text{linalg}}$}
      & \multicolumn{1}{c}{$t_{\text{setup}}$}
      & \multicolumn{1}{c}{$t_{\text{cycle}}$}
      & \multicolumn{1}{c}{$p_{\text{cycle}}$} \\
\hline
\rule[-1mm]{0mm}{5mm}\ip
         & ---      &      10 &      50 &   0.01 &  0.001 & 0.01 & 16.8 &    0.1\% \\
\hline 
\rule{0mm}{4mm}   
         & $2$      &     455 &   1,855 &   0.52 & 0.02 &   0.56 & 16.8 &     3\%  \\ 
  \ax    & $\ell-1$ &  39,711 &  44,731 &  11.06 & 0.37 &  11.53 & 16.8 &    67\%  \\ 
         & $\ell$   & 333,375 & 378,071 &  96.22 & 4.10 & 102.66 & 16.8 &   611\%  \\ 
\hline
\rule[-1mm]{0mm}{5mm}\fem
      & ---      & 333,375 & 378,071 &   ---  &  ---  &   ---  & 872.8 &    5195\%\\ 
\hline 
\end{tabular}
\end{table*}

We continue by considering the setup phase. This consists in the 
computation of the
sampling values and the solution of the resulting linear systems for \ip or
least-squares problems for \ax.
For the \ip approach with its small number of sampling points and very low
dimensional linear system we expect the cost to be negligible. In the \ax
approach, however, the situation might be different as problem size strongly
varies with the choice of the sampling set $S_\ell$.

To quantify this setup cost we report in Tab.~\ref{tab:setup} on run-times
related to the setup phase and compare them to the execution time
for a single V(3,3)-cycle with our surrogate operator. For this we restrict
ourselves to the case of quadratic polynomials and  deliberately consider the worst case scenario of a monomial basis. 
Measurements were performed for a serial run on a single node on the SuperMUC cluster,
cf. Sec.~\ref{sec:scaling} for technical details.
The code was compiled with the Intel Compiler version 15.0.
Linear systems for \ip were solved using the DGESV routine of LAPACK, while
for the least-squares problems in the \ax case we used DGELS. In both cases 
the LAPACK implementation from the Intel Math Kernel Library (MKL) in version
11.3 was employed. The macro mesh consisted of 60 elements, and we used $L=5$
levels of refinement. Reported values are the minimal ones over a collection
of test runs.

Table \ref{tab:setup} lists the size of the sampling set for the finest
level and the total number of sampling points over levels 1 to $L$ for
\ip and \ax with different choices for $m(\ell,j)$, see
\eqref{eqn:samplingSetFunctions}. The value $t_{\text{sample}}$ represents the
time spent with computing stencil weights at the sampling points from the
finite element formulation with projected coordinates.
This was performed as described in
Sec.~\ref{subsubsec:operatorEval} for the \fem approach. The values
$t_{\text{linalg}}$ and $t_{\text{setup}}$ give the time required for the solution
of the linear systems resp.~least-squares problems and the total time of the
setup phase for computing the polynomial coefficients. The last two columns
provide
information to relate the cost of the setup to those for the evaluation and
application of our surrogate operator. $t_{\text{cycle}}$ re\-pre\-sents the
run-time for a single $V(3,3)$--cycle using, in the case of \ip and \ax, the
surrogate operator and $p_{\text{cycle}}$ gives the percentage of $t_{\text{setup}}$
with respect to $t_{\text{cycle}}$.

For comparison the last row shows for the \fem approach the number of
sampling points, the cost for a $V(3,3)$--cycle using this operator 
and percentage of these cost compared to 
$t_{\text{cycle}}$ of the surrogate operator.
Remember that in this approach there is no setup phase, but instead the
stencils are re-computed on-the-fly for each stencil application. This
must be done for every node which corresponds to \ax using $j=\ell$.

Examining the results we see that, as expected, the cost for \ip is marginal,
only 0.1\% of the time required for a
single V(3,3)-cycle. This changes dramatically, when we employ for \ax all
available points $\mathcal{M}_\ell$ on level $\ell$ as sampling points, i.e., ~we
choose $j=\ell$ for $m(\ell,j)$.
However, already for $j=(\ell-1)$ this reduces to 67\%, i.e., two thirds of one
multigrid cycle. For $m(\ell,j)\equiv 2$ this reduces further to only 3\%
and this choice seems to be the optimal trade-off between accuracy and cost.
We point out that the relative cost is measured with respect to one MG-cycle
based on the surrogate operator evaluation. 

Note that for all choices in the \ax case the majority, between 90 and 95\%,
of the time for the setup phase is spent in the evaluation of the stencil
weights at the
sampling points and only the remaining fraction is required for solving the
least-squares problems.

Our previous accuracy experiments had shown that even using a sampling set
$S_\ell$ as coarse as $M_2$ can give satisfactory results and thus the relative setup
cost for our surrogate approaches is quite small compared to the total cost of the solver.


\section{Kernel optimization}
\label{sec:kernel_opt}

In Sec.~\ref{sec:numstudy}, the run-time $t_{\text{cycle}}$ of a single
V-cycle was measured in a serial setting. Comparing the measured 16.8 seconds
to the 5.0 seconds required for a V-cycle with a constant stencil application
shows that our straightforward implementation based on  incremental polynomial evaluation underperforms. We exceed the theoretically expected cost factor given in \eqref{eq:costf} by roughly a factor of 1.68.

Thus we now focus on a node-level performance analysis and a more sophisticated re-implementation of the surrogate polynomial application. 
 As we are mainly interested in extreme
scale simulations, this will be done for the quadratic case.

In the following, we study the optimization of the GS smoother in detail.
Updating the value  of the current iterative solution at  node
$I$ can  be formulated as
\begin{equation}
  \label{eq:iteration-scheme}
  \mathbf{u}_I = \frac{1}{s_{mc}^I} \left( \mathbf{f}_I - \sum_{w \in \mathcal{W} \setminus\{mc\}} s_w^I
  \mathbf{u}_{I_w}\right)
\end{equation}
and, naturally, involves all 15 stencil weights $s_w^I$. Here $I_w$ stands for
the node which is the closest one to $I$ when we move in direction $w$.
We will refer to this as a \emph{stencil-based update (SUP)} below.

Our Gauss-Seidel smoother uses a lexicographic ordering. Within HHG this
implies that the function values $\mathbf{u}_I$ are updated in the following
ordering, as shown in Fig.~\ref{fig:stencilHHG}. We traverse the nodes of a
volume primitive from west to east, going line by line from south to north and
plane by plane from bottom to top.
Hence, when updating the value at node $I$,  the values at the neighboring
nodes with
indices $bc$, $be$, $bnw$, $bn$, $ms$, $mse$, and $mw$ have already been
updated during the current sweep, whereas values at nodes with indices
$me$, $mnw$, $mn$, $ts$, $tse$, $tw$, and $tc$ still need to be updated.
Note that due to the hybrid nature of the smoother this is true for the
majority of, but not for all nodes, as those near the boundary couple to lower
dimensional primitives. Consequently, the only data dependency which must be
considered during the update inside a line is $\mathbf{u}_{I_{mw}}$. This value
is only available for updating $\mathbf{u}_I$ after the update at the previous
node, i.e., $I_{mw}$, was computed. We further remark 
that due to the lexicographic ordering, the $tc$ value is the only one that has not been
touched in a previous update. Hence, it has not been loaded into the cache so far.

Our analysis  revealed that the recursive dependency in
~\eqref{eq:iteration-scheme}
dominates the execution time for the update. Furthermore, it hinders certain
optimizations, like vectorization, and thereby prohibits exploiting the full
floating point performance of the core.

To circumvent this problem, the update formula \eqref{eq:iteration-scheme} for all nodes within one line
of the volume primitive is re-arranged. For this, we split it
into a non-recursive and a recursive part $\mathbf{u}_I = \tau + \rho$ with
\begin{align}
\tau &= \frac{1}{s_{mc}^I} \Big( \mathbf{f}_I -
  \sum_{\substack{w \in \mathcal{W}\\w \notin \{mc,mw\}}} s_w^I \mathbf{u}_{I_w}
  \Big)
\label{eq:iteration-scheme2-tau}\\[1ex]
  \rho &= - \frac{1}{s^I_{mc}}s^I_{mw} \mathbf{u}_{I_{mw}}
\enspace.
\label{eq:iteration-scheme2-rho}
\end{align}
Firstly, the non-recursive part $\tau$ is pre-computed and stored in a
temporary array,
tmp in Alg.~\ref{alg:optGS} below, large enough to hold the results for a
complete line. As an incremental polynomial evaluation does not allow complete
vectorization, we instead represent the stencil weights $s^{d}_w$
for $d\in\mathbb{N}_0$ as
\begin{equation}
\label{eq:linearstencilupdate}
s^{d}_w = \hat{p}_w(0) + d\, \delta \hat{p}_w(0) + \frac{(d-1)d}{2} \delta k_w
\end{equation}
with $\hat p_w(0) :=  {\mathcal P}^\alpha s_w(0,j_0,k_0) $ and
\begin{align*}
\begin{split}
\delta \hat p_w(0) &:=  \left.\frac{d  {\mathcal P}^\alpha s_w(i,j_0,k_0)}{d i} \right|_{i=0} + \frac{\delta k_w}{2}\, ,\\
\delta k_w & := \left.\frac{d^2  {\mathcal P}^\alpha s_w(i,j_0,k_0)}{d i^2} \right|_{i=0} \, .
\end{split}
\end{align*}

Here, the index $d$ denotes the position of a node inside the current line,
starting at 0. For the very first line, the values $\hat p_w(0)$, $\delta \hat p_w(0)$ and
$\delta k_w$ have to be initialized. For all further lines even these
values are obtained by updates analogue to 
\eqref{eq:linearstencilupdate} in $j$ or $k$ direction, respectively.

For performance reasons, \eqref{eq:iteration-scheme2-tau} is further split into
two
loops, where in the first all stencil elements in $\mathcal{W}_1$ are treated
and then those in $\mathcal{W}_2$. The two sets are defined as
\begin{align*}
\mathcal{W}_1 &=\{ me, mnw, mn, ts, tse, tw \}, \\
\mathcal{W}_2 &= \{ tc, bc, be, bnw, bn, ms, mse \} .
\end{align*}
On the tested architecture, see below, this splitting avoids problems with
register spilling, see e.g.~\cite{Comer:2005:PPH}.
The distribution of the sets is based on hardware dependent performance investigations as detailed in
Sec.~\ref{subsec:singelcoreperf} and has no geome\-tric meaning.

Now in a second step, the recursive part $\rho$,
 see \eqref{eq:iteration-scheme2-rho}, is computed and the stencil
update completed by combining it with the pre-computed value $\tau$ stored in the
temporary array.
Unrolling this last loop by a factor of four or eight increases the performance
on the evaluated hardware architecture.

The full pseudo code for the optimized GS smoother is displayed
in Alg.~\ref{alg:optGS}.

\begin{algorithm}
\renewcommand{\algorithmiccomment}[1]{// \emph{#1}}
\begin{algorithmic}[1]
\scriptsize
\FOR{line $(k,j,0:n-1) \in $ TET}   
   \STATE initialize/update $s_w^0$, $\delta\hat p_w^0$
   \STATE \COMMENT{setup stencils along line and store in tmp array tmp[d],
   tmp\_mw[d], and tmp\_mc[d], except recursive part mw.}
   \FOR[loop 1: vectorized]{d = 0, ... n-1}  
      \STATE tmp[d] = $f^d$; tmp\_mc[d] = 0
      \FOR[completely unrolled]{$w_1 \in \mathcal{W}_1$}
         \STATE compute stencil weight $s_{w_1}$ \COMMENT{evaluate \eqref{eq:linearstencilupdate}}
         \STATE tmp[d] -= $s^{d}_{w_1} \times u^{d}_{w_1}$,\enspace tmp\_mc[d] -= $s^{d}_{w_1}$
      \ENDFOR 
    \ENDFOR 

   \FOR[loop 2: vectorized]{d = 0, ... n-1}  
      \FOR[completely unrolled]{$w_2 \in \mathcal{W}_2$}
         \STATE compute stencil weight $s^{d}_{w_2}$ \COMMENT{evaluate \eqref{eq:linearstencilupdate}}
         \STATE tmp[d] -= $s^{d}_{w_2} \times u^{d}_{w_2}$,\enspace tmp\_mc[d] -= $s^{d}_{w_2}$
      \ENDFOR 

      \STATE compute stencil weight $s^{d}_{mw}$ \COMMENT{evaluate \eqref{eq:linearstencilupdate}}
      \STATE tmp\_mc[d] -= $s^{d}_{mw}$,\enspace  s0 = 1 / tmp\_mc[d]

      \STATE tmp\_mw[d] = s0 $\times s^{d}_{mw}$,\enspace tmp[d] = tmp[d] $\times$ s0
   \ENDFOR

   \STATE \COMMENT{compute recursive part to finish application of the stencil(s)}
   \FOR[loop 3: 4- or 8-way unrolled]{d = 0, ... n-1}  
      \STATE $u^{d}_{mc}$ = tmp[d] - tmp\_mw[d] $\times$ $u^{d}_{mw}$
   \ENDFOR
\ENDFOR
\end{algorithmic}
\caption{optimized GS smoother}
\label{alg:optGS}
\end{algorithm}
%
\subsection{Single core performance}
\label{subsec:singelcoreperf}
%
In the following, we will conduct a detailed performance investigation. This
will be based on performance modeling and compared to actual measurements.
Note that we do not include the initialization in line 2 of the algorithm and
that the setup phase for computing the polynomials representing the weights of
our surrogate operator is considered neither in the model nor in the
measurements.
The latter is justifiable as for reasonable choices of $m(\ell,j)$ the
cost is negligible.

\paragraph{Benchmarked system}

The execution is modeled for and evaluated on the Haswell based Intel Xeon
E5-2695 processor with $14$ cores, which was configured in cluster on die mode,
i.\,e.\ seven cores per NUMA domain.
This is the same system as used in the SuperMUC Phase~$2$ cluster, on which the
scaling experiments of Sec.~\ref{sec:scaling} were conducted. The only
difference is that the processor here was only clocked at $2.3$\,GHz instead of
$2.6$\,GHz on the SuperMUC.

One core of the Intel Xeon comprises several execution ports, which can
perform: floating point (FP) multiply \& FP fused multiply add (FMA),
FP addition \& FP FMA, FP division, load, and store.
Transferring a cache line between L1/L2 cache or L2/L3 cache takes
$2$~cycles (cy) on Haswell, respectively. As the system sustains $24.2$\,GiB/s
memory bandwidth from one NUMA domain, it takes
$5.7$\,cy at $2.3$\,GHz to transfer one cache line between L3 cache and memory.

\paragraph{Performance modeling}
Our performance ana\-ly\-sis is based on the Execution--Cache--Memory (ECM)
model~\cite{Hager2012}. In contrast
to the simpler Roofline model~\cite{Williams2009}, which only takes into
account the (number of) floating point operations performed for a certain
amount of data transferred between core and memory, the ECM mo\-del considers
the complete path of the memory hierarchy including caches.
Furthermore the complete execution inside the core is examined, which, besides
floating point operations, includes also, e.g., address generation
and register spilling.
As in the actual memory hierarchy, data is transferred in the granularity of
cache lines, the modeling is based on this unit.
On the target architecture a cache line comprises $64$\,bytes, which is for the
stencil considered equivalent to eight updates. In the following, the modeling
of the execution and the data transfers inside the memory hierarchy are
outlined.

\begin{table*}
\caption{Cycle numbers reported by IACA for the three loops normalized to eight
  stencil-based updates. Given are the maximum values for each execution unit
  of a certain category.
  The reported duration of $40$\,cy from IACA for loop 3 is higher than the
  occupancy of the execution ports. This is due to the data dependencies, which
  hinder efficient instruction pipelining.
\label{tab:iaca}
}\centering
\begin{tabular}{ll|crrrrrcc}
   \hline
         && max && FP division & FP mul & FP add && load & store \\
   \hline
  loop 1 && 22 &&  0  & 20 & 20 && 22 & 2 \\
  loop 2 && 56 && 56  & 32 & 34 && 36 & 4 \\
  loop 3 && 40 &&  0  &  4 &  4 && 12 & 8 \\ 
   \hline 
\end{tabular}
\end{table*}

\paragraph{Modeling of execution}

For modelling the execution of the optimized implementation inside the core, the
\emph{Intel Architecture Code Analyzer} (IACA) is used, see \cite{intel-iaca}.
The tool generates a scheduling of the instructions over the available
execution ports on a certain micro-architecture for a given binary.
This was also used to derive the optimal distribution of the two sets $\mathcal{W}_1$
and $\mathcal{W}_2$.
Here it is assumed that all data can be fetched from L1 data cache.
IACA can operate in two modes: throughput and latency.
In throughput mode it is assumed that the considered loop iterations are
independent and can overlap, while in latency mode, it is assumed that
operations between iterations cannot overlap. Analyzing the three loops in our
implementation with throughput mode resulted in the closest agreement of
prediction and measurement.

The values reported by IACA for eight sten\-cil-based updates, our base unit
from above, on the Haswell
micro-architecture are listed in Tab.~\ref{tab:iaca} for the two vectorized
loops (loop~1 and 2) as well as the loop containing the recursive component
(loop~3).
To ease the modeling process the initialization and setup of the stencil
weights in line~$2$ of Alg.~\ref{alg:optGS} are not considered.
Whereas loop~1 and~2 are limited by the occupancy of the execution ports,
IACA reports for loop~3 as bottleneck the \textit{loop latency} with 40\,cy.
This indicates that due to data dependencies resulting from the recursion in
loop~3 
instructions cannot be fully pipelined and stall until their corresponding 
operands become available.

A more detailed overview is presented in Fig.~\ref{fig:ecm} which displays the
occupancy of the execution units.

\paragraph{Modeling of cache and memory transfers}

The amount and duration of cache line transfers inside the memory hierarchy for
eight stencil-based updates beyond L1 data cache are modeled manually.
Those numbers depend on the refinement level $\ell$ of the macro tetrahedron, as it
determines how many data items can be re-used from cache.
If two planes of the tetrahedron fit concurrently into a cache level, i.e.,
the so called \textit{layer condition}~\cite{stengel-2015} is fulfilled,
then only the leading
stencil neighbor $u^I_{tc}$ must be loaded from a higher cache level, the last
stencil neighbor $u^I_{bc}$ gets evicted, and the remaining neighbors can be
fetched from this cache level.
In our case, we focus on $\ell = 5$ and for simplicity consider
the case where the layer condition is only fulfilled in L3 cache.
For L1 and L2 cache we assume, that in each cache level only six rows of the
tetrahedron, i.e., two rows from top, middle, and bottom plane each, can be
kept concurrently.

With this assumptions between L1/L2 cache and L2/L3 cache five cache lines are
transferred, respectively: one cache line from each plane containing $u^I_{tc}$,
$u^I_{mn}$, and $u^I_{bn}$ is loaded (three loads), one cache line containing
the corres\-ponding element from the $f$ array (one load), and the modified cache
line containing $u^I_{mc}$ gets evic\-ted (one store).
As in L3 cache the layer condition is fulfilled, instead of three cache lines
only the one for the top plane containing $u^I_{tc}$ must be loaded (one load). 
Everything else stays the same, which results in three cache line transfers.
Concerning the arrays used for vectorization to store the temporary results,
they are small enough to fit with the other data into L1 cache.
Further the \emph{least recently used} cache policy, commonly used on current
architectures, ensures that they get not evicted.
\paragraph{Results}
The timings resulting from the IACA analysis combined with the manual modeling
of the data transfers in the memory hierarchy are summarized in
Fig.~\ref{fig:ecm}.
Note that this does not show an exact scheduling of the different execution
phases and only sums up the duration of the different parts, which are involved
during execution.
The predicted duration of $132$\,cy for eight stencil-based updates by the ECM model 
is $\approx 15$\% off from the measurement of the executed kernel.

\begin{figure*}\centering
\includegraphics[width=0.98\textwidth,clip=true]{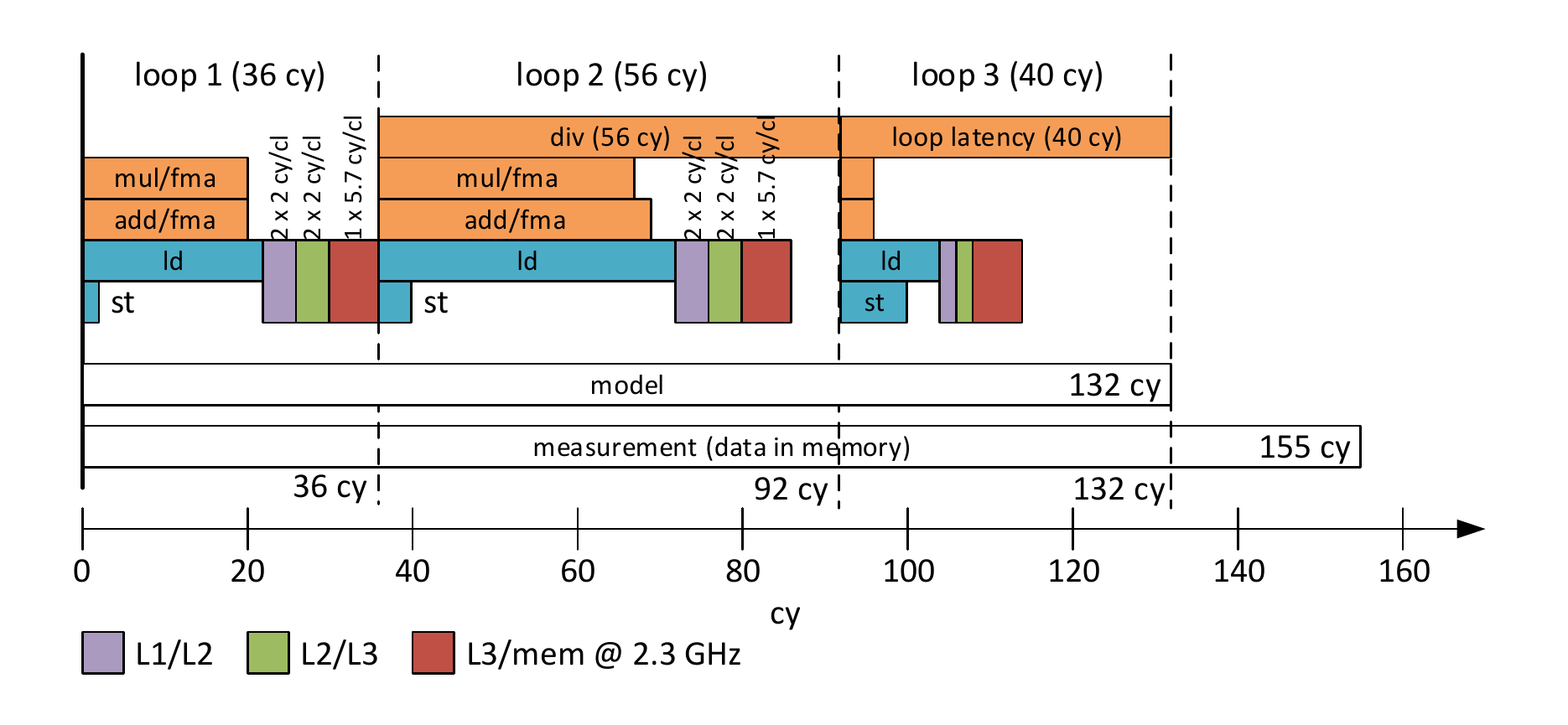}
\caption{Duration of the different phases involved during eight stencil-based
updates as obtained by the ECM model.
Shown are the durations of executing loop 1, 2, and 3, as well as the time
spend with cache line transfers inside the memory hierarchy.
For the loops the occupancy of the different execution ports are shown:
multiplication/fused multiply add (mul/fma), addition/fused multiply add
(add/fma), division (div), loads (ld), stores (st). Considered in neither
model nor measurement is the setup of the initial stencil weights.
\label{fig:ecm}}
\end{figure*}

\subsection{Socket performance}

When increasing the core count the ECM model assumes that only shared resources
can become a bottleneck. In the case of the benchmarked Haswell system this
concerns the segmented L3 cache and the memory interface. However, the
bandwidth of the L3 cache scales linearly with the number of cores. Thus, only
the memory interface could limit performance. Consequently the performance
should increase linearly with the number of cores up to the point where
the memory interface becomes continuously occupied.
The model prediction and the corresponding measurement are both shown in
Fig.~\ref{fig:ecm-scaling}.
The code does not saturate the memory bandwidth (green line with triangles)
completely.
This upper limit is given as ratio of the achievable memory
bandwidth of $24.2$\,GiB/s with the bytes transferred between L3 and memory required
for one stencil-based update.
If the layer condition is fulfilled in L3 cache one stencil-based update
requires 24 bytes: 
load of $u^I_{tc}$, load of one element from $f$ array,
and eviction of updated $u^I_{mc}$.
Utilizing also the SMT thread of each core (single filled red square) increases
performance slightly. 
This has two reasons.
Firstly the memory interface is not continuously utilized and secondly still
execution resources on each core are available.
The resources are available because of pipeline bubbles, i.e., time slots with
no operation in the execution pipeline, due to the division in loop~$2$ and the
recursion in loop~$3$.

\begin{figure}\centering
\includegraphics[width=0.45\textwidth,clip=true]{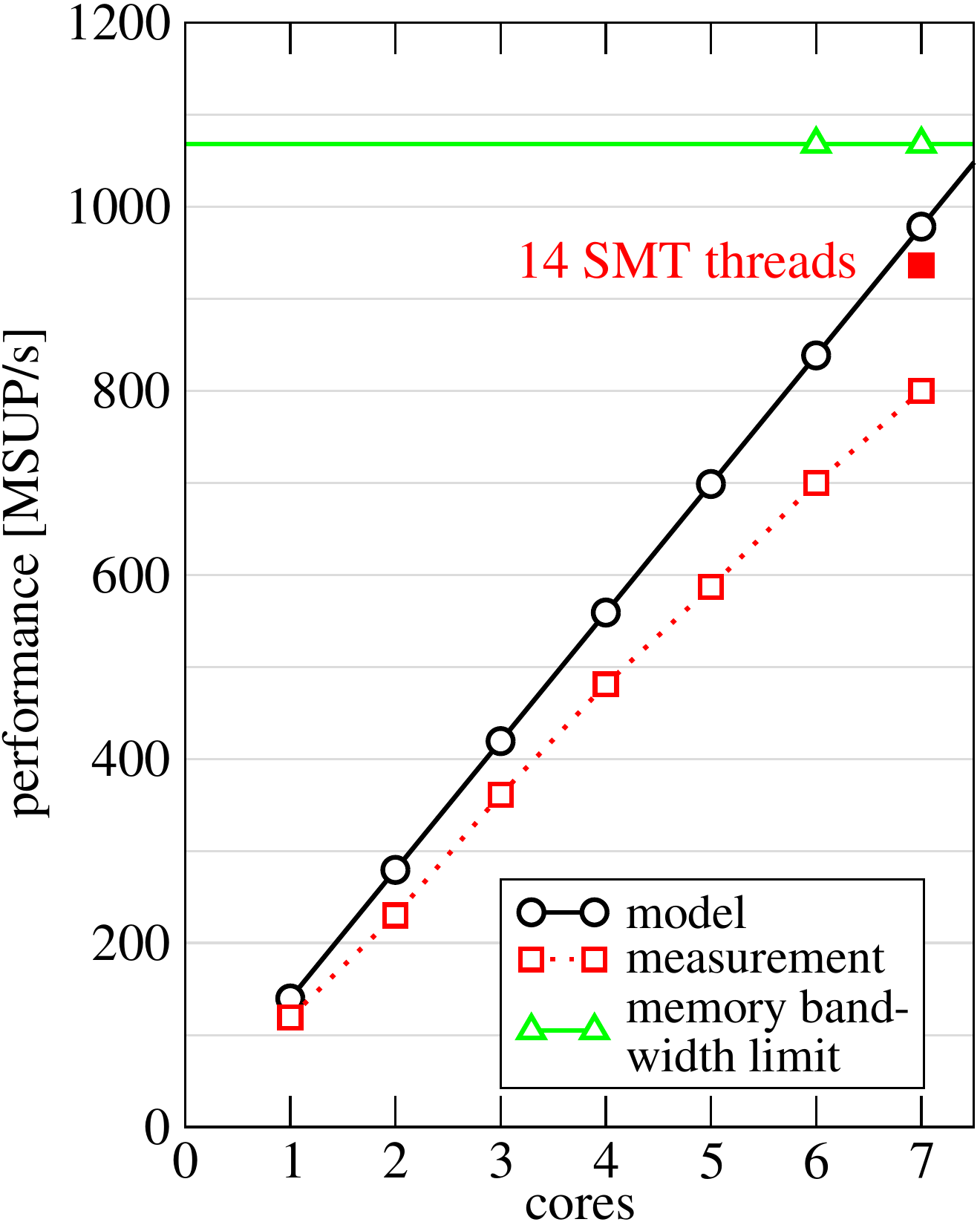}
\caption{Performance of the optimized code (in MSUP/s, mega stencil-based
updates per second) compared with the ECM model prediction.
\label{fig:ecm-scaling}}
\end{figure}

\section{Weak and strong scaling}
\label{sec:scaling}

\begin{figure*}\centering
\begin{tabular}{ccc}
\includegraphics[width=0.45\textwidth]{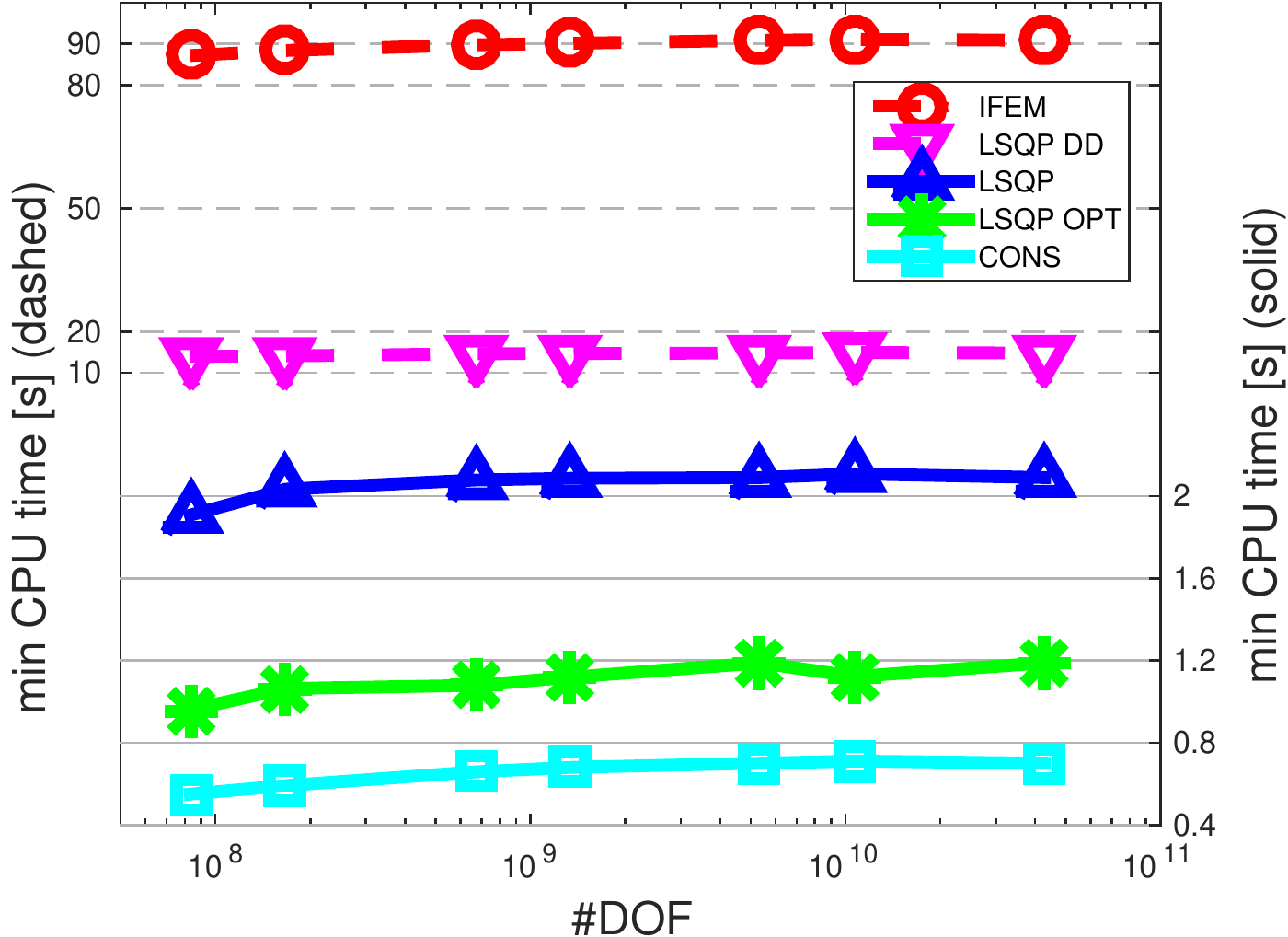} &
\includegraphics[width=0.45\textwidth]{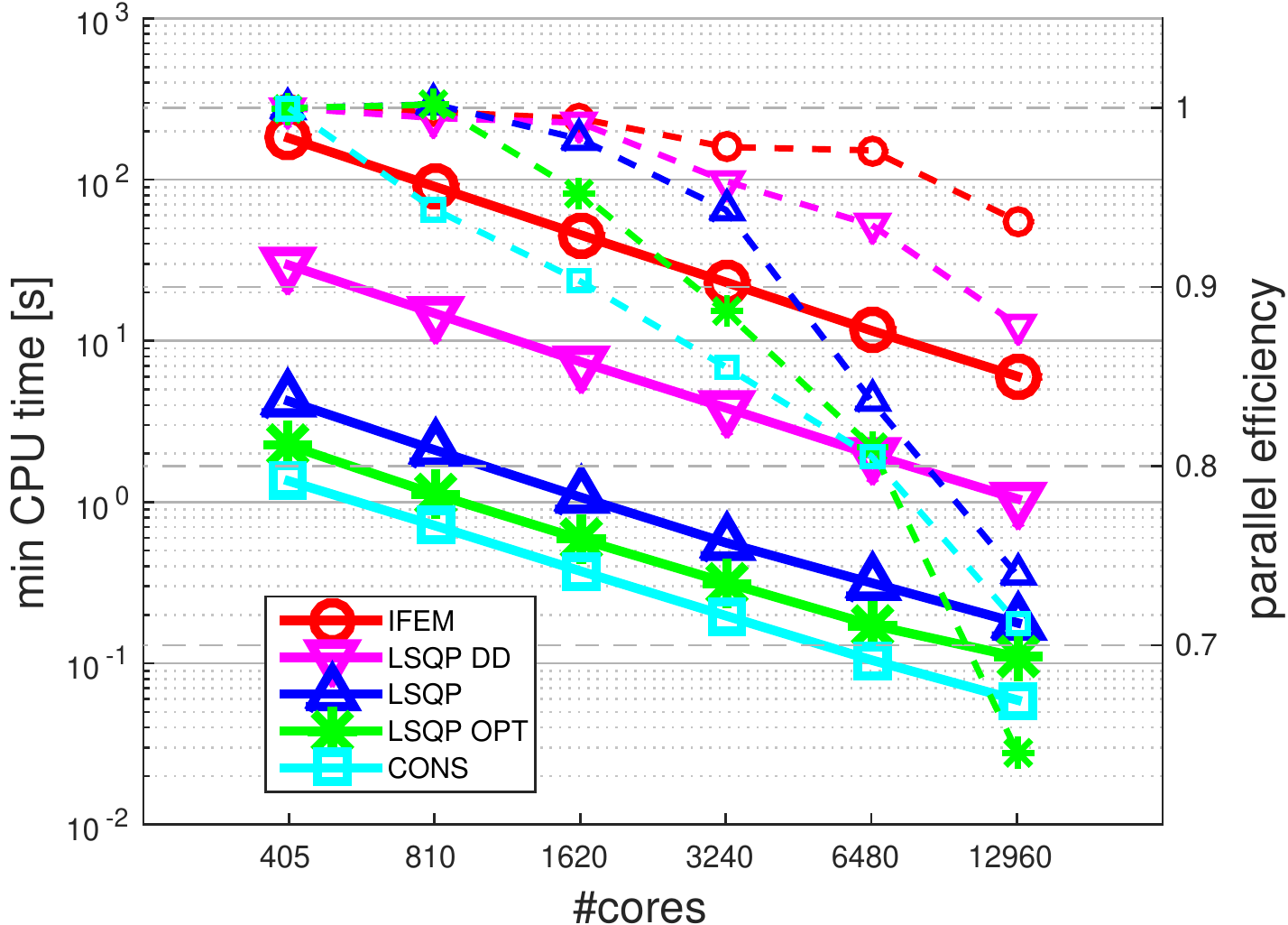} \\
\end{tabular}
\caption{Run-times of one V(3,3)-cycle. (Left) Weak scaling on SuperMUC. 16 macro elements are assigned to one core. The finest grid is fixed to $L=5$,
i.e. there are $3.3\cdot10^5$ DOFs per volume primitive. 
(Right) Strong scaling (solid) and parallel efficiency (dashed). The problem size is fixed to 12,960 macro elements and $L=5$.
In total there are $4.5\cdot10^9$ DOFs.
\label{fig:scaling}}
\end{figure*}

To demonstrate the performance and scala\-bility of our approach, we perform
weak and strong scaling runs on the supercomputer SuperMUC (Phase 2) that is
ranked number 28 in the June 2016 TOP500 list. It is a Lenovo NeXtScale system
with a theoretical peak performance of 3.58 PFLOPs/s.
Each compute node consists of two Haswell Xeon E5-2697 v3 processors where each
processor is equipped with 14 cores running at 2.6\,GHz. Per core 2.3\,GiB of
memory are provided, but typically only 2.1 GiB are available for applications. 
The nodes are connected via an Infiniband (FDR14) non-blocking tree.

For our runs we utilize the Intel C/C++ Compiler 15.0 with flags -O3 -CORE-AVX2 -fma and the Intel MPI library 5.0

As mentioned before degree two polynomials for the \ip/\ax approach provide the most efficient compromise between accuracy and computational cost 
in the HPC context.
In Fig.~\ref{fig:scaling} we present the run-times of one V(3,3)-cycle for the different implementations.
For reproducibility we take the minimum value out of ten cycles reflecting the best exploitation of the machine's capabilities.

The \cons setting serves as minimal cost reference. \ip and \ax 
differ only in the setup phase, so only \ax is considered here. For this approach,
we compare the straightforward implementation of the method with the optimized version
from Sec.~\ref{sec:kernel_opt} termed \axop. 
To demonstrate the cost of a conventional assembly of the stencils also the \fem case is shown.
Furthermore the run-times for double discretization are given.
Here the non-optimized \ax implementation is chosen for the smoother and the conventional stencil assembly
for residual. As this is dominated by the expensive residual, employing the optimized smoother does not lead 
to a significant improvement.

For the weak scaling, Fig.~\ref{fig:scaling} (left), we assign 16 macro
elements to one core and fix the refinement level to $L=5$ giving us
\mbox{$3.3\cdot10^5$} DOFs per volume primitive. The number of macro
elements and cores varies from 240 to 122,880 and from 15 to 7,680, respectively.

We observe that the \fem method is even more expensive than the theoretical
prediction from Tab.~\ref{tab:flops} indicated. This is due to additional memory
accesses and function calls which are not considered in the flop count.
The double discretization is about a factor of 7 slower than the non-optimized \ax method. 
This is in line with the flop count for a V(3,3)-cycle taking into
account the aforementioned additional cost for the residual.
The values for the straightforward implementation of the \ax method reflect the
serial run-time factor of
$(1.68\cdot C(2))$ mentioned in Sec.~\ref{sec:kernel_opt}.
Our optimized version outperforms the cost factor $C(2) \approx 2$, and the run-time increases by only a factor of $1.6$ compared to the \cons case.
Also it shows excellent weak scaling
behavior.

For the strong scaling case, Fig.~\ref{fig:scaling} (right), we fix the problem size to 12,960 macro elements and set $L=5$. In total this 
results in $4.5\cdot10^9$ DOFs. This also gives an upper bound for the number of cores that can be used.
The lower bound is determined by the available memory of 2.1\,GiB per core where we need about
100\,bytes per DOF in our implementation.

Not surprisingly, the \fem implementation shows the best parallel efficiency of above 90\% even for the largest number of cores. 
Here, most of the time is spent with the cost intense on-the-fly stencil computation.
Thus, increasing the amount of communication by increasing the number of cores does hardly affect the performance that scales almost
linearly with the number of cores. This is also the case for the DD approach.
Even though the computationally expensive residual occurs a factor of 7 less
often then in the \fem case, it is still the dominating factor. Whereas for the
\cons and for the \ax implementations
where the stencil is either already known or computed by an inexpensive evaluation of the surrogate polynomials,
we clearly see the influence of communication for the runs with larger number of cores. But nevertheless the run-time even for the largest case of the optimized \ax method 
is a factor of 10 or 55 faster than \ax DD or \fem, respectively, and only 1.8 times slower than the \cons reference. We point out that for the DD approaches, the expensive operator need only be used in the final
few iterations and thus  its cost within the solver can be further reduced.

\section{Conclusion and outlook}
\label{sec:outlook}
We introduced a novel two-scale approach for handling non-polyhedral domains in large scale simulations with matrix-free finite elements.
It is based on replacing the cost intensive 
on-the-fly computations of the node stencil
by the evaluation of a surrogate approximation polynomial of low order.
Typically for large system size, second order polynomials already guarantee
high enough accuracy and thus reduce the flop counts significantly.
This technique can be combined for a further improvement of the numerical accuracy with the
classical double discretization approach. Here the smoother
in the multigrid algorithm employs cheap lower order stencils
while the expensive high accuracy operators are only used to 
compute the residual for the coarse grid correction.

The accuracy of our approach was first examined theoretically by an a priori
estimate and secondly verified by numerical experiments. The implementation was
further optimized on the node-level and systematically analyzed by the ECM
model. Excellent performance and scalability was demonstrated on the
supercomputer SuperMUC. Although all our examples referred to the spherical
shell, this approach is completely local with respect to the macro elements
and open to general blending functions.

Future work will include different partial differential equations such as the
Stokes system, variable coefficients in the PDE and alternative stencil
approximations preserving symmetry for $\Lt$.

\subsubsection*{Acknowledgements}

This work was partly supported  by the German Research Foundation through
the Priority Programme 1648 "Software for Exascale Computing" (SPP\-EXA) and by WO671/11-1.
The authors gratefully acknowledge the Gauss Centre for Supercomputing (GCS) 
for providing computing time on the supercomputer SuperMUC at Leibniz-Rechenzentrum (LRZ).

\bibliographystyle{abbrv}
\bibliography{literature}

\end{document}